\numberwithin{equation}{section}
\newtheorem{theorem}{Theorem}[section]
\theoremstyle{definition}
\newtheorem{corollary}[theorem]{Corollary}
\theoremstyle{remark}
\newtheorem{remark}[theorem]{Remark}
\numberwithin{equation}{section}
\newcommand\C{\mathbb C}         
\newcommand\R{\mathbb R}       
\newcommand\N{\mathbb N}         
\newcommand\Ha{\mathbb H}       
\newcommand\D{\mathbb D}     
\renewcommand\Im{\text{Im}\,}        
\renewcommand\Re{\text{Re}\,}
\begin{document}
\setlength{\parindent}{0pt}

\title{Two value ranges for symmetric self-mappings of the unit disc}
\author{Julia Koch  \and Sebastian Schlei\ss inger
       \thanks{Supported by the ERC grant  ``HEVO - Holomorphic Evolution Equations'' no. 277691.}}
\date{\today}
\maketitle

\abstract{
\noindent
Let $\D$ be the unit disc and $z_0\in\D.$ We determine the value range $\{f(z_0)\,|\, f\in \mathcal{R}^\geq\}$, where $\mathcal{R}^\geq$ is the set of holomorphic functions $f:\D\to\D$ with $f(0)=0$ and $f'(0)\geq0$ that have only real coefficients in their power series expansion around $0$, and the smaller set $\{f(z_0)\,|\, f\in \mathcal{R}^\geq, \text{$f$ is typically real}\}.$\\ 
Furthermore, we describe a third value range $\{ f(z_0) \,|\, f \in \mathcal{I}\}$, where $\mathcal{I}$ consists of all univalent self-mappings of the upper half-plane $\Ha$ with hydrodynamical normalization which are symmetric with respect to the imaginary axis.\\
}

\noindent{\bf Keywords:}  value ranges, radial Loewner equation, chordal Loewner equation, univalent functions, typically real functions\\

\noindent
{\bf 2010 Mathematics Subject Classification:} 30C55, 30C80.

\setlength{\parindent}{0pt}

\section{Introduction}

 There are many results that determine value regions $V_A(z_0)=\{f(z_0)\,|\, f\in A\}$, where $z_0$ is a fixed point and $f$ runs through a set $A$ of analytic functions in the unit disc $\D$ or the upper half-plane $\Ha$. In this paper we describe $V_A(z_0)$ for the following cases.\\

In Section \ref{mao}, we let $A$ be the set of all univalent self-mappings of $\Ha$ with hydrodynamical normalization which are symmetric with respect to the imaginary axis. The value set $V_A(z_0)$ can be computed  by analysing the reachable set of a certain  Loewner equation.\\

In Section \ref{kissinger}, we consider the case where $A$ consists of all typically real mappings $f:\D\to\D$ with $f(0)=0$ and $f'(0)>0.$ It turns out that $V_A(z_0)$ is equal to the value set $\{f(z_0)\,|\, f\in A \text{ and $f$ is univalent}\}$, which has been determined by Prokhorov in \cite{MR1335945}. A univalent self mapping $f:\D\to\D$ with $f(0)=0$ and $f'(0)>0$ is typically real if and only if $f(\overline{z})=\overline{f(z)},$ i.e. $f$ is symmetric with respect to the real axis. 
We also consider the smaller set $\{f(z_0)\,|\, f\in A \text{ and $f'(0)=\tau$}\}$ for fixed $\tau \in (0,1].$\\
Furthermore, we determine $V_A(z_0)$ for the case where $A$ consists of all self-mappings $f:\D\to\D$ with $f(0)=0$ and $f'(0)\geq0$ that have only real coefficients in its power series expansion around $0$.\\

\textbf{Acknowledgements}: The authors would like to thank the anonymous referee for their time and effort taken to review our manuscript, and for their helpful suggestions.

\section{Hydrodynamic normalization}\label{mao}

Let $\mathcal H$ be the set of all univalent, i.e. holomorphic and injective, mappings $f:\Ha\to\Ha$ with hydrodynamic normalization at infinity, i.e. 
\begin{equation}\label{norm}f(z)=z-\frac{c}{z}+\gamma(z),\end{equation}
where $\operatorname{hcap}(f):=c\geq0$, which is usually called \emph{half-plane capacity}, and $\gamma$ satisfies $\angle\lim_{z\to \infty}z\cdot\gamma(z)=0.$

\begin{remark} Let $f\in \mathcal H$ with $\operatorname{hcap}(f)=c.$ If we transfer $f$ to the unit disc by conjugation by the Cayley transform, then we obtain a function $\tilde{f}:\D\to\D$ having the expansion
$$\tilde{f}(z)=z-\frac{c}{4}(z-1)^3+\tilde{\gamma}(z),$$
where $\angle\lim_{z\to1}\frac{\tilde{\gamma}(z)}{(z-1)^3}=0.$
\end{remark}

Let $z_0\in \Ha.$ From the boundary Schwarz lemma it is clear that $\Im(f(z_0))\geq \Im(z_0)$ and $\Im(f(z_0))=\Im(z_0)$ if and only if $f$ is the identity. It is quite simple to show that
$$V_{\mathcal H}(z_0) = \{w\in\Ha \,|\, \Im(w)>\Im(z_0)\}\cup \{z_0\},$$
see \cite{MR3262210}, Theorem 2.4.\\

Next, let $$\mathcal I = \{f\in\mathcal{H}\,|\, f(-\overline{z}) = -\overline{f(z)} \text{ for all $z\in\Ha$}\}.$$
$\mathcal I$ consists of all $f\in \mathcal H$ such that the image $f(\Ha)$ is symmetric with respect to the imaginary axis.

\begin{theorem}\label{chordal_free} Let $z_0\in \Ha.$ If $\Re(z_0)=0$, then $V_{\mathcal I}(z_0)=\{z_0+it\,|\, t\in[0,\infty)\}.$\\
Next, assume $\Re(z_0)>0$ and define the two curves $C(z_0)$ and $D(z_0)$ by
$$C(z_0)=\left\{\sqrt{z_0^2-2t}\,|\, t\in[0,\infty)\right\} =\{x+iy\in\Ha\, |\, x\cdot y=\Re(z_0)\Im(z_0),\, x\in(0,\Re(z_0)]\},$$
$$D(z_0)=\left\{z_0+e^{i\arg(z_0)}\cdot t\,|\, t\in[0,\infty)\right\}.$$
Then, the set $\overline{V_{\mathcal I}(z_0)}$ is the closed subset of $\Ha$ bounded by $C(z_0)$ and $D(z_0),$ and $V_{\mathcal I}(z_0)=\{z_0\}\cup \overline{V_{\mathcal I}(z_0)}\setminus D(z_0).$ \\
The case $\Re(z_0)<0$ follows from the case $\Re(z_0)>0$ by reflection w.r.t the imaginary axis.
\end{theorem}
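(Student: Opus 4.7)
The plan is to represent every $f\in\mathcal I$ via a chordal Loewner chain with a symmetric driving measure and then analyse the reachable set of the resulting control system. Every $f\in\mathcal H$ is the terminal map of a chordal Loewner chain, so that the evolution of $z_0$ is governed by the (reverse-time) ODE
\[
\dot w(s) = -\int_{\R}\frac{2}{w(s)-u}\,d\mu_s(u),\qquad w(0)=z_0,\quad w(T)=f(z_0),
\]
for some measurable family of probability measures $\{\mu_s\}$ on $\R$; the condition $f\in\mathcal I$ is equivalent to each $\mu_s$ being invariant under $u\mapsto-u$. Writing $\mu_s=\tfrac{1}{2}(\nu_s+R_{*}\nu_s)$ with $\nu_s$ supported on $[0,\infty)$ and $R(u)=-u$ transforms the equation into
\[
\dot w(s) = -\int_0^\infty\frac{4w(s)}{w(s)^2-u^2}\,d\nu_s(u).
\]
Since both $T$ and the total mass of $\nu_s$ can be rescaled without changing the orbit of $z_0$, finding $V_{\mathcal I}(z_0)\setminus\{z_0\}$ reduces to computing the attainable set of the differential inclusion $\dot w\in K(w)$, where $K(w):=\operatorname{cone}\{-2w/(w^2-u^2):u\in[0,\infty)\}$.

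The case $\Re z_0=0$ is essentially immediate. For $z_0=iy_0$ one has $-\bar z_0=z_0$, so the symmetry $f(-\bar z)=-\overline{f(z)}$ forces $f(z_0)\in i\R$; combined with $\Im f(z_0)\ge\Im z_0$ (stated just before Theorem \ref{chordal_free}) this pins $f(z_0)$ to the ray $\{z_0+it:t\ge 0\}$. Conversely, the family $f_a(z)=\sqrt{z^2-2a}$ (branch in $\Ha$, $a\ge 0$) lies in $\mathcal I$ and satisfies $f_a(iy_0)=i\sqrt{y_0^2+2a}$, sweeping out the whole ray.

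For $\Re z_0>0$ the core of the argument is an analysis of $K(w)$. Using that $\arg(w^2-u^2)$ increases monotonically from $2\arg w$ to $\pi$ as $u$ ranges over $[0,\infty)$, one sees that $K(w)$ is the set of positive multiples of $e^{i\phi}$ with $\phi\in[\arg w,\pi-\arg w]$, its extreme rays being generated by $u=0$ (direction $-1/w$) and the limit $u\to\infty$ (direction $w/|w|^2$). The integral curve of the first extreme direction satisfies $\tfrac{d}{dt}w^2=\text{const}$, giving $w(t)=\sqrt{z_0^2-2t}$ and thus parametrising $C(z_0)$; the integral curve of the second is a positive multiple of $w$, parametrising $D(z_0)$. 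On each of $C(z_0)$ and $D(z_0)$ one extreme ray of $K(w)$ is tangent to the curve, while the remainder of $K(w)$ points strictly into the region $\Omega$ bounded by $C(z_0)\cup D(z_0)$; consequently no trajectory from $z_0$ can exit $\overline\Omega$, which proves $\overline{V_{\mathcal I}(z_0)}\subseteq\overline\Omega$.

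The reverse inclusion follows from a controllability argument: $K(w)$ has nonempty interior at every interior point of $\Omega$, so one can steer $z_0$ to any target in $\Omega$ by concatenating short trajectories with $u=0$ and with $u$ large. The curve $C(z_0)$ is realised explicitly by the family $f_a$, while $D(z_0)\setminus\{z_0\}$ is \emph{not} contained in $V_{\mathcal I}(z_0)$: remaining on $D(z_0)$ would require the velocity to lie on the extreme ray $u\to\infty$, which is only a limit of attainable directions and is not produced by any finite positive driving measure. The main technical difficulty will be the interior reachability step, since the ODE is singular on $\{w^2=u^2\}$ and one must verify that bang-bang switching between $u=0$ and $u$ large actually sweeps out all of $\Omega$; I would handle this by combining a short-time local reachability estimate near each interior point with an open-mapping/continuity argument to extend reachability globally.
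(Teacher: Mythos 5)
Your overall strategy coincides with the paper's: represent $f\in\mathcal I$ by a chordal Loewner flow with a reflection-symmetric driving measure, reduce to the one-point control system $\dot w=\int w/(u-w^2)\,\nu_t(du)$ with $\nu_t$ supported on $[0,\infty)$, identify the extremal trajectories $u=0$ (giving $C(z_0)$, i.e. $w(t)=\sqrt{z_0^2-2t}$) and $u\to\infty$ (giving $D(z_0)$), and show that the attainable directions always point weakly into the region bounded by these two curves. Your treatment of the case $\Re z_0=0$, of the inclusion $\overline{V_{\mathcal I}(z_0)}\subseteq\overline\Omega$, and of the exclusion of $D(z_0)\setminus\{z_0\}$ (the direction $e^{i\arg w}$ is a limit of, but never equal to, an average of the attained directions, so $\arg w(t)$ strictly decreases) all match the paper's argument, which phrases the cone condition as the differential inequality $-\xi/\eta\le d\xi/d\eta<\xi/\eta$ after reparametrizing by the strictly increasing coordinate $\eta=\Im w$. (Two small remarks: the singularity on $\{w^2=u^2\}$ that worries you is vacuous, since $w$ stays in $\Ha$ while $u$ is real; and to rule out $D(z_0)$ you should integrate the strict inequality, Gronwall-style, to get $\xi(t)/\xi_0<\eta(t)/\eta_0$ for all $t>0$, which also prevents a trajectory from returning to $D(z_0)$ later.)

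The genuine gap is the reverse inclusion, which you explicitly defer: you propose bang-bang switching between $u=0$ and $u$ large plus a local-reachability/open-mapping argument, but you do not carry it out, and this is precisely the step where the real work lies. The paper closes it with a short explicit construction that you should adopt: in the $\eta$-parametrization one has $d\xi/d\eta=(\xi/\eta)\cdot\frac{U-|w|^2}{U+|w|^2}$, and for each constant $x\in[-1,1)$ the choice $U(\eta)=\frac{1+x}{1-x}\bigl(\xi_0(\eta/\eta_0)^{2x}+\eta^2\bigr)\ge 0$ makes the control ratio identically equal to $x$, producing the trajectory $\xi(\eta)=\xi_0(\eta/\eta_0)^{x}$. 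As $x$ ranges over $[-1,1)$ and $\eta$ over $[\eta_0,\infty)$ these power curves sweep out exactly $J(z_0)\setminus\hat D(z_0)$, so every interior point and every point of $C(z_0)$ is attained by a single explicit admissible control — no switching, no open-mapping argument, and no need to verify local controllability near the singular boundary behaviour. One further point of rigour your sketch elides: slit-type (Dirac-driven) trajectories only give $\overline{R(z_0)}=\overline{V_{\mathcal I}(z_0)}$ via a density argument, so you must either check (as the paper implicitly does) that the explicit trajectories above genuinely come from admissible continuous drivers $U$, or argue separately which boundary points survive the passage to the closure.
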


The value set $f^{-1}(z_0)$ for the inverse functions is given in a quite similar way.

\begin{theorem}\label{chordal_free_inv} Let $z_0\in \Ha$ and define 
$$V_{\mathcal I}^*(z_0)=\{f^{-1}(z_0)\,|\, f\in \mathcal{I}, z_0\in f(\Ha)\}.$$
 If $\Re(z_0)=0$, then $V_{\mathcal I}^*(z_0)=\{z_0-it\,|\, t\in[0,\Im(z_0))\}.$\\
Next, assume $\Re(z_0)>0$ and define the two curves $C^*(z_0)$ and $D^*(z_0)$ by
$$C^*(z_0)=\left\{\sqrt{z_0^2+2t}\,|\, t\in[0,\infty)\right\}=\{x+iy\in\Ha\, |\, x\cdot y=\Re(z_0)\Im(z_0),\, x\in[\Re(z_0),\infty)\},$$
$$ D^*(z_0)=\left\{z_0-e^{i\arg(z_0)}\cdot t\,|\, t\in[0,|z_0|)\right\}.$$
Then, the closure $\overline{V_{\mathcal I}^*(z_0)}$ is the closed subset of $\Ha$ bounded by the curves $C^*(z_0),$ $D^*(z_0)$ and the positive real axis. The set $V_{\mathcal I}^*(z_0)$ is given by $V_{\mathcal I}^*(z_0)=\{z_0\}\cup \overline{V_{\mathcal I}^*(z_0)}\setminus (D^*(z_0) \cup [0,\infty)).$ \\
The case $\Re(z_0)<0$ follows from the case $\Re(z_0)>0$ by reflection w.r.t the imaginary axis.
\end{theorem}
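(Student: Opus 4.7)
The starting point is the duality
\[
w\in V_{\mathcal I}^*(z_0) \iff \exists f\in\mathcal I\colon f(w)=z_0 \iff z_0\in V_{\mathcal I}(w),
\]
which reduces Theorem \ref{chordal_free_inv} to inverting the description of $V_{\mathcal I}$ provided by Theorem \ref{chordal_free}: I need to characterize those $w\in\Ha$ for which $z_0$ lies in the region described there.

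\textbf{Case $\Re(z_0)=0$.} If $f\in\mathcal I$ satisfies $f(w)=z_0=it$, then $f(-\overline w)=-\overline{f(w)}=z_0$, and injectivity of $f$ forces $-\overline w=w$, i.e.\ $w=is$ for some $s>0$. The purely imaginary case of Theorem \ref{chordal_free} gives $V_{\mathcal I}(is)=\{i(s+r):r\ge 0\}$, so $z_0\in V_{\mathcal I}(is)$ iff $0<s\le t$, producing the claimed formula.

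\textbf{Case $\Re(z_0)>0$.} The symmetry $V_{\mathcal I}(-\overline w)=\{-\overline v:v\in V_{\mathcal I}(w)\}$ restricts attention to $w$ in the open first quadrant: $V_{\mathcal I}(is)\subset i\R$ excludes $z_0$, and for $\Re(w)<0$ the set $V_{\mathcal I}(w)$ sits in the second quadrant. For $w=x_w+iy_w$ with $x_w,y_w>0$, Theorem \ref{chordal_free} describes $V_{\mathcal I}(w)$ at each height $y>y_w$ as the horizontal slice with left endpoint $x_w y_w/y$ on $C(w)$ (included) and right endpoint $yx_w/y_w$ on $D(w)$ (excluded). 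Hence $z_0=x_0+iy_0\in V_{\mathcal I}(w)$ is equivalent to
\[
x_w y_w\le x_0 y_0 \qquad\text{and}\qquad \arg(w)<\arg(z_0),
\]
the first being the ``below the hyperbola through $z_0$'' condition (equality placing $w\in C^*(z_0)$) and the second being the ``counterclockwise side of the radial line through $z_0$'' condition (equality placing $w\in D^*(z_0)\setminus\{z_0\}$). Combined with $y_w>0$, these inequalities carve out precisely the region bounded by $C^*(z_0)$, $D^*(z_0)$, and the positive real axis; the open/closed structure of the boundary transfers accordingly, and the isolated inclusion $w=z_0$ is supplied by the identity map.

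\textbf{Main obstacle.} The principal subtlety is the careful bookkeeping of boundary inclusions: $C^*(z_0)$ is attained (translated from the inclusion of $C(w)$ in $V_{\mathcal I}(w)$), whereas $D^*(z_0)$ and the positive real axis are only approached as limits---the former inherits the exclusion of $D(w)$ from Theorem \ref{chordal_free}, and the latter because $f(\Ha)\subset\Ha$ forbids real preimages. The reduction from $\Re(z_0)<0$ to $\Re(z_0)>0$ via reflection across the imaginary axis is immediate from the symmetry built into $\mathcal I$.
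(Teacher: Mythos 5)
Your argument is correct, but it is not the route the paper takes: the authors prove Theorem \ref{chordal_free_inv} by declaring it ``completely analogous'' to Theorem \ref{chordal_free}, i.e.\ by rerunning the reachable-set analysis of the symmetric chordal Loewner equation for the reverse flow. You instead deduce the inverse theorem purely formally from the direct one via the equivalence $w\in V_{\mathcal I}^*(z_0)\iff z_0\in V_{\mathcal I}(w)$, and then invert the explicit slice description of $V_{\mathcal I}(w)$: reading Theorem \ref{chordal_free} at height $y_0$ gives the two conditions $\Re(w)\Im(w)\le \Re(z_0)\Im(z_0)$ (with equality on $C^*$) and $\arg(w)<\arg(z_0)$ (with equality on $D^*$), and one should note, as you implicitly do, that these two together already force $\Im(w)<\Im(z_0)$, so no extra constraint is needed. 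The injectivity argument in the purely imaginary case and the quadrant reduction via $f(-\overline z)=-\overline{f(z)}$ are also sound. What your approach buys is that no new ODE analysis is needed at all --- the second theorem becomes a corollary of the first, and the appearance of the positive real axis as a new boundary component is explained transparently (preimages cannot be real, but the admissible region $\{ab\le x_0y_0,\ \arg w<\arg z_0\}$ accumulates on the axis). What it costs is that it leans on the full sharp form of Theorem \ref{chordal_free}, including exactly which boundary arcs are attained; since that is precisely what the paper establishes, the reduction is legitimate and arguably cleaner than repeating the Loewner computation.
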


Figure \ref{fig1} shows the curves $C(1+i)$ and $D(1+i)$ (dashed), as well as $C^*(1+i)$ and $D^*(1+i)$.
\begin{figure}[h]
\begin{center}

\includegraphics[width=5cm]{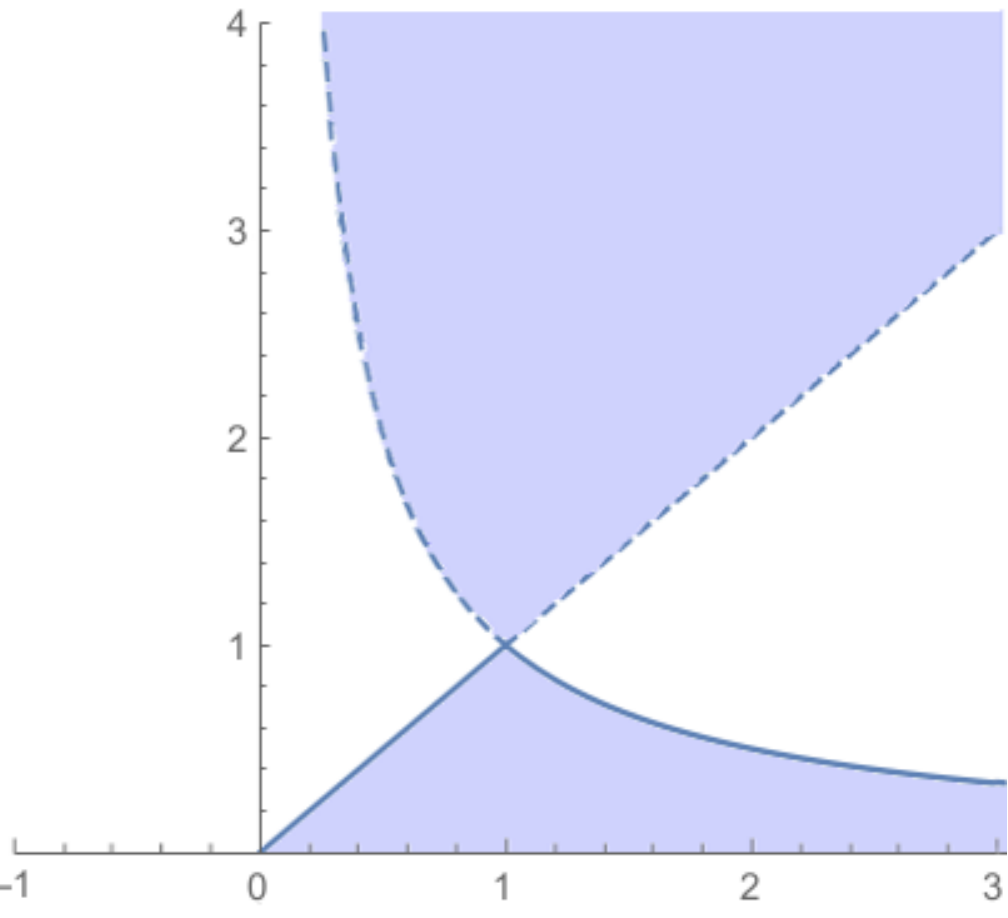}
\caption{$V_{\mathcal I}(1+i)$ and $V_{\mathcal I}^*(1+i).$}
\label{fig1}
\end{center}
\end{figure}

\begin{proof}[Proof of Theorem \ref{chordal_free}]
Without loss of generality we may assume that $z_0\in Q_1:=\{z\in\C\, |\, \Re z\geq0, \Im z>0\}.$\\

Now consider the chordal Loewner equation
\begin{equation}\label{Loewner0C}
\text{$\dot{f}_t(z) = \int_\R \frac{1}{a-f_t(z)} \, \alpha_t(da)$ for a.e. $t\geq0$, \quad $f_0(z)=z\in\Ha$,} 
\end{equation}
where $\alpha_t$ is a Borel probability measure on $\R$ for every $t\geq0$, and the function $t\mapsto \int_\R \frac{1}{a-z} \, \alpha_t(da)$ is measurable for every $z\in\Ha.$
For every $f\in \mathcal H$ there exists $T>0$ and such a family $\{\alpha_t\}_{t\geq0}$ of probability measures such that the solution $\{f_t\}_{t\geq0}$ of \eqref{Loewner0C} satisfies $f_T=f$; see \cite{MR1201130}, Theorem 5.\\

Now let $f_T=f\in \mathcal I.$ Then we can find a solution $\{f_t\}_{t\in[0,T]}$ such that $f_t\in \mathcal I$ for all $t\in[0,T],$ which means that $ \alpha_t$ can be written as $\alpha_t=1/2 \mu^*_t + 1/2 \mu_t$, where $ \mu_t$ is a probability measure supported on $[0,\infty)$ and $ \mu^*_t$ is the reflection of $ \mu_t$ to $(-\infty,0].$\\ This leads to the symmetric Loewner equation

\begin{equation}\label{general}
\dot{f}_t(z) =  \int_\R \frac{1/2}{a-f_t(z)} \,   \mu_t(da) + \int_\R \frac{1/2}{-a-f_t(z)} \, \mu_t(da)\\
=
  \int_\R \frac{f_t(z)}{a^2-f_t(z)^2}  \, \mu_t(da) =  \int_\R \frac{f_t(z)}{u-f_t(z)^2}  \, \nu_t(du),
\end{equation}
where we put $u=a^2 \in [0,\infty)$ and $\nu_t(A)=\mu_t(\sqrt{A})$, $\nu_t(B)=0$ for Borel sets $A\subset[0,\infty)$ and $B\subset(-\infty,0)$.\\

Thus we can consider the initial value problem  
\begin{equation}\label{Loewner1C} \dot{w}(t) = \int_\R \frac{w(t)}{u-w(t)^2}  \, \nu_t(du),\quad w(0)=z_0\in\Ha, \end{equation}

and have 
\begin{equation}\label{mass}
V_{\mathcal I}(z_0)=\{w(T)\,|\, \text{$w(t)$ solves \eqref{Loewner1C}}, T\geq 0\}.
\end{equation}

Next, we observe that the set $\mathcal{I}_S:=\{f\in\mathcal I \,|\, \text{$f(Q_1)=Q_1\setminus \gamma$ for a simple curve $\gamma$}\}$ is dense in $\mathcal I$ by a standard argument for univalent functions; see \cite[Section 3.2]{Duren:1983}.\\
Denote by $\delta_x$ the Dirac measure in $x\in\R.$ 
If $f\in\mathcal{I}_S,$ then we can find a continuous function $U:[0,\infty)\to [0,\infty)$ such that the measure $\nu_t=  \delta_{U(t)}$ in \eqref{general} generates $f$, i.e. $f_T=f;$ see Chapter I.3.8 in \cite{tammi}, which considers the unit disc and the corresponding symmetric radial Loewner equation; see also \cite[\S 3]{Gor86}, \cite[\S 5,6]{Gor15}. \\

Consider the corresponding initial value problem 

\begin{equation}\label{Loewner1Cslits} \dot{w}(t) = \frac{w(t)}{U(t)-w(t)^2} ,\quad w(0)=z_0\in\Ha, \end{equation}
where $U:[0,\infty)\to [0,\infty)$ is a continuous function. Denote by $R(z_0)$ the reachable set of this equation, i.e. $R(z_0):=\{w(T)\,|\, \text{$w(t)$ solves \eqref{Loewner1Cslits}}, T\geq 0\}.$ Then $V_{\mathcal{I}_S}(z_0)\subset R(z_0)$ and because of the denseness of $\mathcal{I}_S$ in $\mathcal I,$ we have

\begin{equation}\label{dense}\overline{R(z_0)} = \overline{V_{\mathcal I}(z_0)}.\end{equation}

 Now we determine the set $V_{\mathcal I}(z_0)$.\\

If $\Re(z_0)=0$, it is clear that $w(t)\in \{z_0+is\,|\, s\in[0,\infty)\}$ for all $t\in[0,\infty)$. The solution to \eqref{Loewner1Cslits} for $U(t)\equiv0$ is given by $w_1(t)=\sqrt{z_0^2-2t}.$ As $\Im(w_1(t))\to\infty$ as $t\to\infty,$ we conclude that $V_{\mathcal I}(z_0)=\{z_0+it\,|\, t\in[0,\infty)\}.$\\

Now assume that $\Re(z_0)>0.$\\

Step 1: First, we determine $R(z_0).$ We write $w(t)=\xi(t)+i \eta(t)$ and $z_0=\xi_0+i\eta_0$; thus \eqref{Loewner1Cslits} reads
$$ \dot{\xi}(t) =  \frac{\xi(t)(U(t)-|w(t)|^2)}{|U(t)-w(t)^2|^2},\qquad    
\dot{\eta}(t)=  \frac{\eta(t)(U(t)+|w(t)|^2)}{|U(t)-w(t)^2|^2},\qquad \xi(0)=\xi_0,\quad \eta(0)=\eta_0.$$
As $t\mapsto \eta(t)$ is strictly increasing, we can parametrize $\xi$ and $U$ by $\eta$ and obtain
\begin{align}\label{nacheta}
\frac{d\xi}{d\eta} = \frac{\xi(\eta)}{\eta}\frac{U(\eta)-|w(\eta)|^2}{U(\eta)+|w(\eta)|^2}.
\end{align}

Since $U(\eta)\geq0$, we have
$$ -1 \leq \frac{U(\eta) - |w(\eta)|^2}{U(\eta) + |w(\eta)|^2} < 1$$
which yields the inequalities
$$\frac{-\xi}{\eta}\leq  \frac{d\xi}{d\eta} < \frac{\xi}{\eta}.$$
By solving the equations $\frac{d\xi'}{d\eta}= -\frac{\xi'}{\eta}$ and $\frac{d\xi'}{d\eta}= \frac{\xi'}{\eta}$ with $\xi'(\eta_0)=\xi_0,$ we arrive at 
\begin{equation}\label{thunder}\frac{\eta_0}{\eta(t)}\leq \frac{\xi(t)}{\xi_0} < \frac{\eta(t)}{\eta_0}
\end{equation}
for all $t>0$. We have equality for the left case when $U(t)\equiv 0$, which leads to the solution $w(t)=\sqrt{z_0^2-2t}$, i.e. the curve $C(z_0)$. The case $\frac{\xi}{\xi_0} = \frac{\eta}{\eta_0}$ corresponds to the curve $D(z_0)$, which does not belong to the set $R(z_0)\setminus\{z_0\}.$\\

On the Riemann sphere $\hat{\C}$, the two curves $\hat{C}(z_0)=C(z_0)\cup\{\infty\}$ and $\hat{D}(z_0)=D(z_0)\cup\{\infty\}$ intersect at $z_0$ and $\infty$, and form the boundary of two Jordan domains. We denote by $J(z_0)$ the closure of the one that is contained in $\Ha\cup \{\infty\}.$  Note that $R(z_0)\subset J(z_0)$ by \eqref{thunder}.
We wish to show that  $R(z_0)=\{z_0\}\cup J(z_0)\setminus \hat{D}(z_0).$ \\

To this end, consider \eqref{nacheta} with the driving term 
$$U(\eta)=\frac{1+x}{1-x}\left(\xi_0\left(\frac{\eta}{\eta_0}\right)^{2x}+\eta^2\right), \quad -1\leq x<1.$$ 
This yields
$$\frac{U(\eta) - |w(\eta)|^2}{U(\eta) + |w(\eta)|^2}\equiv x,$$
and thus
$$\xi(\eta)=\xi_0\left(\frac{\eta}{\eta_0}\right)^x,$$
and it is easy to see that 
$$J(z_0)\setminus\hat D(z_0)=\left\{\xi_0\left(\frac{\eta}{\eta_0}\right)^x\,\Big|\, \eta \in[0,\infty),\,  -1\leq x<1\right\}\subseteq R(z_0).$$

Step 2: Finally, we show that $V_{\mathcal I}(z_0) = \{z_0\}\cup J(z_0)\setminus \hat{D}(z_0),$ which concludes the proof.\\
 As we already know that $\overline{R(z_0)} = \overline{V_{\mathcal I}(z_0)}$ (equation \eqref{dense}), we only need to prove that $\hat{D}(z_0)\setminus\{z_0\}$ has empty intersection with $V_{\mathcal I}(z_0).$ \\
Recall \eqref{mass} and let $w(t)$ be a solution to \eqref{Loewner1C}. We write again $w(t)=\xi(t)+i\eta(t).$ Then 
 
$$ \dot{\xi} = \xi(t) \int_\R \frac{u-|w(t)|^2}{|u-w(t)^2|^2}\,\mu_t(du) < \xi(t) \int_\R \frac{u+|w(t)|^2}{|u-w(t)^2|^2}\,\mu_t(du),\qquad   
\dot{\eta} =  \eta(t)\int_\R\frac{u+|w(t)|^2}{|u-w(t)^2|^2}\,\mu_t(du).$$
Again, $t\mapsto \eta(t)$ is strictly increasing, and we parametrize $\xi$ by $\eta$ to get
$$\frac{d\xi}{d\eta} < \frac{\xi}{\eta},$$ which yields $\frac{\xi(t)}{\xi_0}<\frac{\eta(t)}{\eta_0}$ for all $t>0,$ hence $(D(z_0)\setminus\{z_0\})\cap V_{\mathcal I}(z_0) = \emptyset.$

\end{proof}

The proof of Theorem \ref{chordal_free_inv} is  completely analogous.

\section{Interior normalization}\label{kissinger}

\subsection{Univalent self-mappings with real coefficients}

The classical Schwarz lemma tells us that a holomorphic map $f:\D\to \D$ with $f(0)=0$ has the property $|f(z)|\leq |z|$ for every $z\in\D.$ A refinement of this result was obtained by Rogosinski \cite{rogo2} (see also \cite{Duren:1983}, p. 200) who determined the value range $V_A(z_0)$ where $z_0\in \D$ and $A$ consists of all holomorphic self-mappings $f$ of $\D$ that fix the origin and $f'(0)>0$. In \cite{MR3262210} the authors consider the set of all univalent self-mappings of $\D$ with the same normalization, i.e. 
$$A=\mathcal S_>:=\{f:\D\to\D \text{ univalent}, f(0)=0, f'(0)>0\}.$$
One can obtain a refinement of the main result of \cite{MR3262210} by considering the normalization $f'(0)=e^{-T}$, $T>0,$ instead of $f'(0)>0$; see \cite{MR3398791}. \\

Let $\mathcal U$ be the set of all $f\in \mathcal{S}_>$ in $\D$ having only real coefficients in their Taylor expansion around the origin. 
The following result has been proven in \cite{MR1335945}. The proof uses Pontryagin's maximum principle, which is applied to the radial Loewner equation.\\
An elementary proof of the theorem is given in \cite{Pfrang}. 

\begin{theorem}[\cite{MR1335945}]\label{radial_free}\label{pro} Let $z_0\in\D\setminus\{0\}.$ \\
If $z_0 \in \R,$ then $V_{\mathcal U}(z_0) \cup \{0\}$ is the closed interval with endpoints $0$ and $z_0$.\\Define the two curves $C_+(z_0)$ and $C_-(z_0)$ by
\begin{align*}
C_{+}(z_0):=\left\{\frac1{2z_0}(e^t (z_0+1)^2 -2z_0- 
 e^{t/2}(z_0 + 1) \sqrt{e^t (z_0 + 1)^2-4z_0 })\,|\,  t\in[0,\infty]\right\},\\
C_{-}(z_0):=\left\{\frac1{2z_0}(e^t (z_0-1)^2 + 2 z_0 +
 e^{t/2} (z_0 - 1) \sqrt{e^t (z_0 - 1)^2 + 4z_0})\,|\,  t\in[0,\infty]\right\}.
\end{align*}

If $z_0 \not\in\R,$ then $V_{\mathcal U}(z_0)\cup\{0\}$ is the closed region whose boundary consists of the two curves $C_+(z_0)$ and $C_-(z_0)$, which only intersect at $t\in\{0,\infty\}.$\\

Furthermore, for $z_0 \not\in\R,$ any boundary point of $V_{\mathcal U}(z_0)$ except $0$ can  be reached by only one mapping $f\in \mathcal{R}$, which is of the form $f_{1,t}(z)= \frac1{2z}(e^t (z+1)^2 -2z- 
 e^{t/2}(z + 1) \sqrt{e^t (z + 1)^2-4z })$ or $f_{2,t}(z)= \frac1{2z}(e^t (z-1)^2 + 2 z +
 e^{t/2} (z - 1) \sqrt{e^t (z - 1)^2 + 4z})$ with $t\in[0,\infty)$.\\
 The mapping $f_{1,t}$ maps $\D$ onto $\D\setminus[2 e^t -1- 2 e^{t/2} \sqrt{e^t-1}, 1]$ and $f_{2,t}$ maps $\D$ onto $\D\setminus[-1, -2 e^t +1+ 2 e^{t/2} \sqrt{e^t-1}].$
\end{theorem}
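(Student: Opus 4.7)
The plan is to mimic the chordal argument of Theorem \ref{chordal_free} using the radial Loewner equation. For $f\in\mathcal U$ with $f'(0)=e^{-T}$, the standard theory provides a chain $\{f_t\}_{t\in[0,T]}$ with $f_0=\mathrm{id}$ and $f_T=f$, driven by probability measures $\mu_t$ on $\partial\D$ via
\begin{equation*}
\dot f_t(z)=-f_t(z)\int_{\partial\D}\frac{\kappa+f_t(z)}{\kappa-f_t(z)}\,\mu_t(d\kappa).
\end{equation*}
Because $\mathcal U$ is closed under $g(z)\mapsto\overline{g(\bar z)}$, the measure $\mu_t$ can be symmetrised to be invariant under $\kappa\mapsto\bar\kappa$, by the same kind of reasoning as in the chordal case. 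Pairing the mass at $e^{i\theta}$ with that at $e^{-i\theta}$ (exactly as in \eqref{general}) turns the equation for $w(t)=f_t(z_0)$ into
\begin{equation*}
\dot w(t)=-w(t)\int_0^\pi\frac{1-w(t)^2}{1-2w(t)\cos\theta+w(t)^2}\,\nu_t(d\theta),\qquad w(0)=z_0,
\end{equation*}
and the usual density of symmetric slit mappings reduces the problem to the one-parameter control system
\begin{equation*}
\dot w(t)=-\frac{w(t)(1-w(t)^2)}{1-2u(t)w(t)+w(t)^2},\qquad u(t)=\cos\theta(t)\in[-1,1],
\end{equation*}
with continuous $u$.

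The bang-bang controls $u\equiv\pm 1$ will integrate explicitly. For $u\equiv 1$ the equation becomes $\dot w=-w(1+w)/(1-w)$, yielding the conserved law
\begin{equation*}
\frac{w(t)}{(1+w(t))^2}=e^{-t}\,\frac{z_0}{(1+z_0)^2};
\end{equation*}
solving this quadratic in $w$ and picking the branch that starts at $z_0$ recovers $w(t)=f_{1,t}(z_0)$ and hence the curve $C_+(z_0)$. The slit endpoint $2e^t-1-2e^{t/2}\sqrt{e^t-1}$ claimed in the theorem is then simply $f_{1,t}(1)$, i.e.\ the image of the tip of the Loewner slit. The case $u\equiv -1$ is completely symmetric and produces $C_-(z_0)$ together with $f_{2,t}$. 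If $z_0\in\R$, the real axis is invariant under the control system, the denominator satisfies $1-2uw+w^2\geq(1-|w|)^2>0$, and $\dot w<0$ for every admissible control; a direct monotonicity argument, together with continuity in $T$ and $u$, shows that $w(T)$ sweeps the entire open segment between $0$ and $z_0$, giving the first assertion.

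For $z_0\notin\R$ the heart of the proof is to show that $C_+(z_0)$ and $C_-(z_0)$ bound the reachable set $R(z_0)$ and that every interior point of the enclosed region is attained. Writing $w=re^{i\phi}$, one checks that $r$ is strictly decreasing along every trajectory (reflecting $f_t'(0)=e^{-t}$), so one may reparametrise time by $r$ and obtain an ODE of the shape $d\phi/dr=F(r,\phi,u)$. The goal is a sharp comparison $F_-(r,\phi)\leq d\phi/dr\leq F_+(r,\phi)$ in which equality is attained only at $u=\pm 1$; integrating it with $\phi(|z_0|)=\arg z_0$ reproduces the two curves $C_\pm(z_0)$ and traps every trajectory in the region they enclose, in direct analogy with \eqref{thunder}. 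The complementary claim that every interior point is actually reached is cleanest via Pontryagin's maximum principle, as in \cite{MR1335945}: on this system the boundary controls are forced to be bang-bang $u\equiv\pm 1$, and so $\partial R(z_0)\setminus\{0,z_0\}$ is identified with $C_+(z_0)\cup C_-(z_0)$.

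The main obstacle will be the sharpness of the $d\phi/dr$ comparison together with the verification that each point of $C_\pm(z_0)$ is reached by exactly one map, namely the corresponding member of the one-parameter families $f_{1,t}$ or $f_{2,t}$; uniqueness of solutions of the Loewner ODE with fixed driving measure then yields the last assertion of the theorem, and the explicit slit descriptions follow by computing the boundary images of $f_{1,t}$ and $f_{2,t}$ directly from their formulas.
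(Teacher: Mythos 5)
The paper does not actually prove Theorem \ref{pro}: it is quoted from Prokhorov \cite{MR1335945} (with an elementary proof attributed to \cite{Pfrang}), so there is no internal proof to compare against. Your outline follows exactly the route one would expect from the paper's own treatment of the chordal analogue in Section \ref{mao} and from the cited source: symmetrized radial Loewner equation, reduction by density to a single continuous driving point $u(t)=\cos\theta(t)\in[-1,1]$, explicit integration of the bang-bang controls $u\equiv\pm1$ via the conserved quantity $w/(1+w)^2=e^{-t}z_0/(1+z_0)^2$ (which does reproduce $f_{1,t}$, $f_{2,t}$ and the curves $C_\pm(z_0)$), and a monotonicity argument for real $z_0$. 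All of that is sound.

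However, as written the proposal defers precisely the step that constitutes the proof: the ``sharp comparison'' $F_-\leq d\phi/dr\leq F_+$ with equality only at $u=\pm1$ is announced as ``the goal'' and ``the main obstacle'' but never established, and without it neither the inclusion $R(z_0)\subseteq$ (region bounded by $C_\pm$) nor the uniqueness statement for boundary points follows. The claim is true and can be closed as follows: writing $w=re^{i\phi}$ and $P(u)=\frac{1-w^2}{1+w^2-2uw}$, one has $\frac{d\phi}{d\log r}=\Im P/\Re P$; the map $u\mapsto 1+w^2-2uw$ traces a straight line, so its argument is strictly monotone in $u\in\R$, hence $\arg P(u)$ is strictly monotone on $[-1,1]$ and extremized exactly at $u=\pm1$; since $\Re P>0$ ($P$ is half the sum of two Herglotz kernels), $\Im P/\Re P=\tan\arg P$ inherits the strict monotonicity, which is the analogue of the inequality preceding \eqref{thunder}. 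A second, smaller gap: invoking Pontryagin only identifies candidates for $\partial R(z_0)$; it does not show that every interior point of the enclosed region is attained. For that you need either explicit intermediate controls sweeping out the region (as the paper does in Step 1 of the proof of Theorem \ref{chordal_free} with the one-parameter family indexed by $x\in[-1,1)$) or a connectedness argument for the endpoint map of the control system. With these two points supplied, your argument would be a complete proof along the lines of the cited references.
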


\begin{remark} For $\tau\in(0,1]$ let $\mathcal{U}(\tau)=\{f\in\mathcal U \,|\, f'(0)=\tau\}.$ The value set $V_{\mathcal{U}(\tau)}$ is described in \cite{PS16}.
\end{remark}

The value set for the inverse functions can be obtained quite similarly.

\begin{theorem}\label{radial_free_inv} Let $z_0\in\D\setminus\{0\}$ and define
$$V_{\mathcal U}^*(z_0) = \{f^{-1}(z_0) \,|\, f\in \mathcal{U}, z_0\in f(\D)\}.$$ \\
If $z_0 \in (0,1),$ then $V_{\mathcal U}^*(z_0)=[z_0,1)$, and if $z_0\in(-1,0)$, then $V_{\mathcal U}^*(z_0)=(-1,z_0]$. \\
Define the two curves $C_+^*(z_0)$ and $C_-^*(z_0)$ by
\begin{align*}
C_{+}^*(z_0):=\left\{\frac1{2z_0}(e^t (z_0+1)^2 -2z_0- 
 e^{t/2}(z_0 + 1) \sqrt{e^t (z_0 + 1)^2-4z_0 })\,|\,  t\in[-\infty,0]\right\},\\
C_{-}^*(z_0):=\left\{\frac1{2z_0}(e^t (z_0-1)^2 + 2 z_0 +
 e^{t/2} (z_0 - 1) \sqrt{e^t (z_0 - 1)^2 + 4z_0})\,|\,  t\in[-\infty,0]\right\}.
\end{align*}

Now let $\Im(z_0)>0.$ Then $\overline{V_{\mathcal U}(z_0)}$ is the closed region bounded by the curves $C_+(z_0)$, $C_-(z_0)$ and $E:=\partial\D \cap \overline{\Ha}.$ The set $V_{\mathcal U}(z_0)$ is given by $V_{\mathcal U}(z_0)=\overline{V_{\mathcal U}(z_0)}\setminus E.$
\end{theorem}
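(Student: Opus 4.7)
The plan is to reduce Theorem \ref{radial_free_inv} to Theorem \ref{radial_free} via the tautological duality
\[
w\in V_{\mathcal U}^*(z_0)\iff z_0\in V_{\mathcal U}(w),
\]
which is just a rewriting of $w=f^{-1}(z_0)\Leftrightarrow f(w)=z_0$, $f\in\mathcal U$. So the question becomes: for which $w\in\D\setminus\{0\}$ does $z_0$ lie in Prokhorov's region $V_{\mathcal U}(w)$? In the real case $z_0\in(0,1)$, any $f\in\mathcal U$ with $f(w)=z_0$ forces $w\in\R$, since $f(\overline w)=\overline{f(w)}=z_0=f(w)$ and injectivity of $f$ give $w=\overline w$. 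Theorem \ref{radial_free} then yields $V_{\mathcal U}(w)\cup\{0\}=[0,w]$ for real $w\in(0,1)$ (and $[w,0]$ for $w\in(-1,0)$), so $z_0\in V_{\mathcal U}(w)$ is equivalent to $w\in[z_0,1)$; the case $z_0\in(-1,0)$ is symmetric.

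Now assume $\Im(z_0)>0$. The real-coefficient property $f(\overline z)=\overline{f(z)}$ together with $f'(0)>0$ and univalence implies $f(\D\cap\Ha)\subseteq\D\cap\Ha$, hence $V_{\mathcal U}^*(z_0)\subseteq\D\cap\Ha$. The boundary of $V_{\mathcal U}^*(z_0)$ is identified by dualizing: Theorem \ref{radial_free} says $\partial V_{\mathcal U}(w)$ is traced by the extremal slit mappings $f_{1,t}(w)$ and $f_{2,t}(w)$, $t\geq 0$, so $z_0$ sits on $\partial V_{\mathcal U}(w)$ precisely when $w=f_{i,t}^{-1}(z_0)$ for some $i\in\{1,2\}$, $t\geq 0$. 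Since $f_{i,t}$ is the flow of an autonomous radial Loewner ODE with constant driving at $\pm 1$, the one-parameter family is a group and $f_{i,t}^{-1}=f_{i,-t}$ as an analytic continuation of the explicit formulas. Substituting $s=-t\in[-\infty,0]$ into the expressions for $f_{1,t}(z_0)$ and $f_{2,t}(z_0)$ reproduces exactly the curves $C_+^*(z_0)$ and $C_-^*(z_0)$ in the statement; direct computation of the limits at $s\to-\infty$ shows they terminate at $-1$ and $1$ respectively, both lying on $\overline E$.

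It remains to fill in the interior of the enclosed region and exclude $E$ itself. Exclusion of $E$ is immediate from $|f^{-1}(z_0)|<1$ for any $f\in\mathcal U$. For the interior, the plan is a two-parameter deformation: the composition $g_{t,s}:=f_{2,s}\circ f_{1,t}$ belongs to $\mathcal U$ for all $t,s\geq 0$, and the continuous map $(t,s)\mapsto g_{t,s}^{-1}(z_0)$ sends the closed quadrant into $\overline{\D\cap\Ha}$ in such a way that the two coordinate edges $\{s=0\}$ and $\{t=0\}$ parametrize $C_+^*(z_0)$ and $C_-^*(z_0)$, while the corner at infinity approaches $E$. A standard degree/open-mapping argument, modelled on Step~1 in the proof of Theorem \ref{chordal_free}, then identifies the image of this quadrant with the full closed bounded region. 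The hard part will be exactly this last step: establishing the properness and boundary behaviour of $(t,s)\mapsto g_{t,s}^{-1}(z_0)$ precisely enough to rule out the image folding back and to conclude surjectivity onto the interior.
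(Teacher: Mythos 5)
Your skeleton is sound as far as it goes: the tautological duality $w\in V_{\mathcal U}^*(z_0)\Leftrightarrow z_0\in V_{\mathcal U}(w)$, the real case via $f(\overline w)=\overline{f(w)}$ and injectivity, and the identification of the boundary curves are all correct. In particular $f_{1,t}=h^{-1}\circ(e^{-t}h)$ with $h(z)=z/(1+z)^2$ and $f_{2,t}(z)=-f_{1,t}(-z)$, so $f_{i,t}^{-1}=f_{i,-t}$ on $f_{i,t}(\D)$ and the substitution $t\mapsto-t$ really does produce $C_\pm^*(z_0)$, with endpoints $-1$ and $1$ on $\overline E$. (Two smaller points you would still need to make precise: the inclusion $\partial V_{\mathcal U}^*(z_0)\cap\D\subseteq\{w\,|\,z_0\in\partial V_{\mathcal U}(w)\}$ requires a continuity argument for $w\mapsto V_{\mathcal U}(w)$, which the explicit formulas for $C_\pm(w)$ do supply; and note the theorem as printed has typos --- the last paragraph should read $V_{\mathcal U}^*$ and $C_\pm^*$.)

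The proof is nonetheless incomplete, and the missing step is the substantive one. Surjectivity onto the interior --- which you yourself flag as ``the hard part'' --- is not established, and the specific device you propose is problematic. The map $(t,s)\mapsto g_{t,s}^{-1}(z_0)$ with $g_{t,s}=f_{2,s}\circ f_{1,t}$ is not defined on the whole quadrant: $g_{t,s}(\D)=\D\setminus\bigl([-1,d_s]\cup f_{2,s}([c_t,1])\bigr)$, and the second slit $f_{2,s}([c_t,1])$ is generically non-real, so it can pass through $z_0$, in which case $z_0\notin g_{t,s}(\D)$ and the corresponding point is excluded from $V_{\mathcal U}^*(z_0)$ by definition. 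Even where the map is defined, no argument is given that this particular two-parameter family sweeps out the whole region rather than folding or covering only part of it; the degree argument you invoke needs exactly the properness and injectivity-on-the-boundary facts that are in question. For comparison, the paper gives no written proof here (it says the result ``can be obtained quite similarly''), the intended route being to rerun the reachable-set analysis of the symmetric radial Loewner equation for the inverse (time-reversed) flow, exactly as Theorem \ref{chordal_free_inv} is obtained from Theorem \ref{chordal_free}: there one gets two-sided differential inequalities giving the bounding curves \emph{and} exhibits a one-parameter family of driving terms realizing every intermediate slope, which is what fills the interior. Either completing that analysis, or finishing your duality by showing directly that $\{w\,|\,z_0\in V_{\mathcal U}(w)\}$ is the connected region cut out by $C_+^*(z_0)$, $C_-^*(z_0)$ and $E$, is needed to close the gap.
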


 Figure \ref{fig2} shows the set $V_{\mathcal U}(z_0)$ (orange), which lies inside the heart-shaped set
$V_{\mathcal{\mathcal S_>}}(z_0)$ (blue) that is determined in \cite{MR3262210}, and the set $V_{\mathcal U}^*(z_0)$ (red, dashed) for $z_0=0.9 e^{i\pi/4}$.

\begin{figure}[h]
\rule{0pt}{0pt}

\centering
\includegraphics[width=7cm]{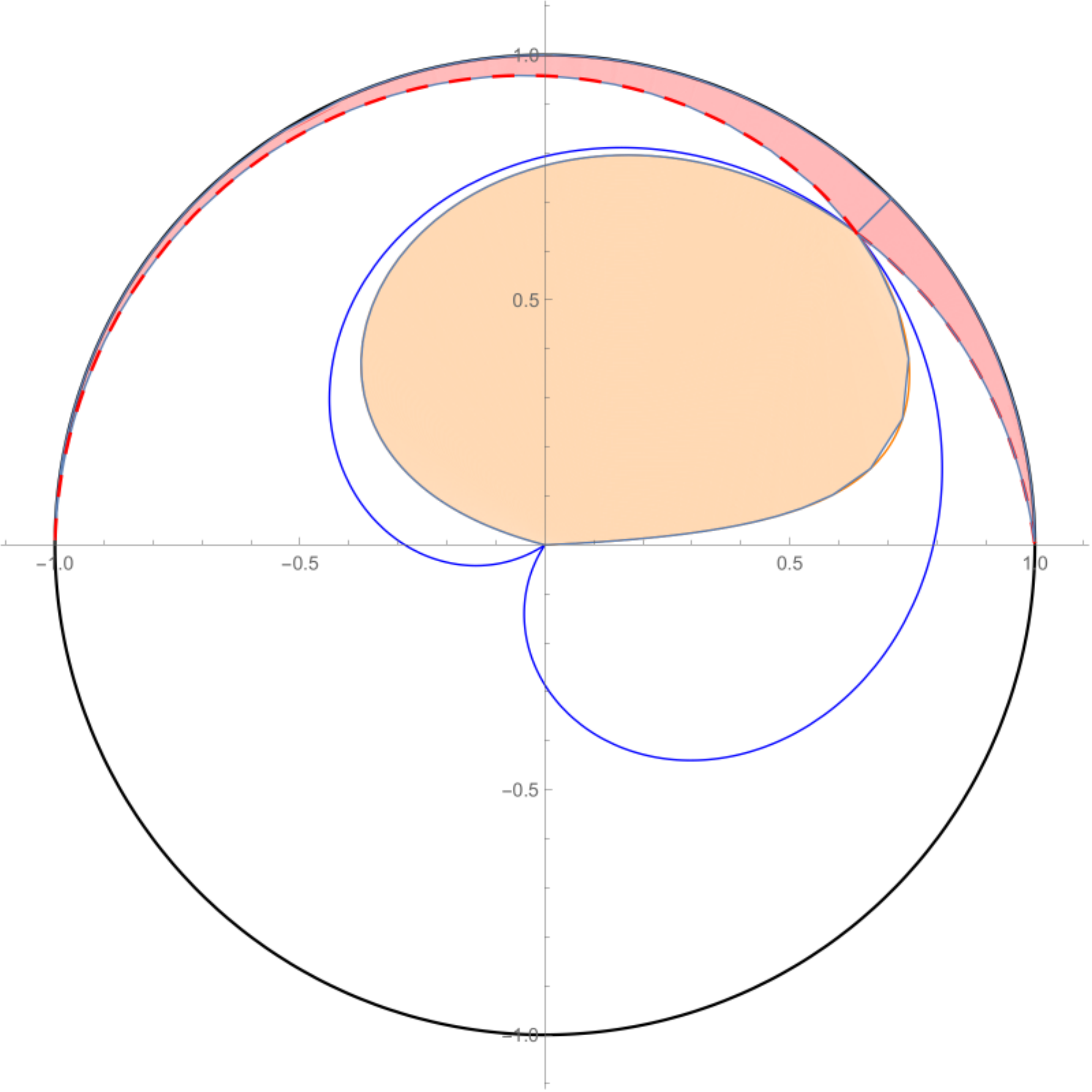}
\caption{$V_{\mathcal U}(0.9 e^{i \pi/4})$}
\label{fig2}

\end{figure}
\subsection{Bounded typically real mappings}

Following Rogosinski \cite{rogo1}, a holomorphic map $f:\D\to\C$ is called \emph{typically real} if 
$$\Im(f(z)) \Im(z) \geq0\quad \text{for all $z\in\D.$}$$
Now we define $\mathcal T$ as the set of all typically real self-mappings $f:\D\to\D$ with $f(0)=0$ and $f'(0)>0$. 
Obviously, $\mathcal U \subset \mathcal T.$ \\
From Rogosinski's work one immediately obtains an integral representation for typically real mappings, see also \cite{Rob}, Section 2. In order to determine the value region $V_{\mathcal T}(z_0)$, we will need the following integral representation for bounded typically real mappings.

\begin{theorem}[\cite{Szapiel}, Theorem 2.2]\label{szapiel} Let $f \in \mathcal T$ with $f'(0)=\tau>0.$ Then there exists a probability measure $\mu$ supported on $B:=\{(x,y)\in\R^2 \,|\, -1\leq x \leq 2\tau-1\leq y\leq1\}$ such that $$ f(z) = \frac{\sqrt{g_{\mu,\tau}(z)}-1}{\sqrt{g_{\mu,\tau}(z)}+1}, $$
where we take the holomorphic branch of the square root with $\sqrt{1}=1$ and $$g_{\mu,\tau}(z) = \int_{B} \frac{(1 + z)^2 (1 - 2 (1 - 2 \tau + x + y) z + z^2)}{(1 - 2 x z + 
   z^2) (1 - 2 y z + z^2)} \,\mu(dxdy).$$
\end{theorem}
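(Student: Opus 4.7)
My plan, parallel to Theorem~\ref{chordal_free}, is to realize $\mathcal T(\tau) := \{f\in\mathcal T \,|\, f'(0)=\tau\}$ as the reachable set of a symmetric radial Loewner equation and then to identify the extreme points yielding the stated integral representation. First I would apply the Cayley-type transformation $p(z) := (1+f(z))/(1-f(z)) = \sqrt{g_{\mu,\tau}(z)}$; for $f\in\mathcal T(\tau)$ this produces a Carath\'eodory function $p:\D\to\{w \,|\, \Re w>0\}$ with $p(0)=1$, $p'(0)=2\tau$, and the reflection symmetry $p(\overline{z})=\overline{p(z)}$ inherited from the typical realness of $f$. The Herglotz representation, combined with the symmetrization $\theta\leftrightarrow -\theta$ forced by the reflection, then gives
\begin{equation*}
 p(z)=\int_{-1}^1 \frac{1-z^2}{1-2tz+z^2}\,d\nu(t)
\end{equation*}
for a probability measure $\nu$ on $[-1,1]$ whose first moment equals $\tau$ (this encodes $p'(0)=2\tau$).

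Next I would square this identity to represent $g=p^2$ as a double integral against the product measure $\nu\otimes\nu$, and expect the kernel of Theorem~\ref{szapiel} to arise from it by symmetrizing in the diagonal flip $x\leftrightarrow y$ and using the moment constraint to split the off-diagonal kernel cleanly. The identification of the rectangular region $B=\{(x,y)\in\R^2 \,|\, -1\leq x\leq 2\tau-1\leq y\leq 1\}$ as the support is the content of the extremal analysis: each corner or edge of $B$ should correspond to a concrete slit extremal map. For instance $(x,y)=(-1,1)$ produces the conformal map of $\D$ onto $\C\setminus(-\infty,1-\tau]$, which in $f$-coordinates is a single-slit self-map of $\D$ with its slit on the negative real axis, and a direct Taylor expansion of $K(z,x,y)$ at $z=0$ gives $g_{\mu,\tau}'(0)=4\tau$ for every $(x,y)\in B$, which also clarifies why the splitting point of the support is precisely $2\tau-1$.

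The main obstacle I foresee is the algebraic step of turning the product-measure kernel $(1-z^2)^2/[(1-2xz+z^2)(1-2yz+z^2)]$ into the stated form $(1+z)^2(1-2(1-2\tau+x+y)z+z^2)/[(1-2xz+z^2)(1-2yz+z^2)]$ while rigorously justifying the reduction of the support to $B$: the moment condition $\int t\,d\nu=\tau$ does not transfer to a measure on $B$ in a naive way, so a two-dimensional balayage argument is needed to redistribute the mass of $\nu\otimes\nu$ onto the rectangle $B$. An alternative, in the spirit of Section~\ref{mao}, would be to set up the symmetric radial Loewner equation with driving $\lambda_t$ supported on $[-1,1]$, identify the two-point controls $\lambda_t=\tfrac{1}{2}(\delta_x+\delta_y)$ as the extreme ones producing the slit mappings $f_{x,y}\in\mathcal T(\tau)$, and then invoke Choquet's theorem on the compact convex hull of $\{((1+f)/(1-f))^2 \,|\, f\in\mathcal T(\tau)\}$ to conclude.
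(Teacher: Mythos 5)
First, a point of order: the paper does not prove this statement at all --- it is imported verbatim from Szapiel--Szapiel (\cite{Szapiel}, Theorem 2.2) and used as a black box in the proof of Theorem \ref{Winnetou}, so there is no internal argument to measure your proposal against; it has to stand on its own.

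It does not, because of one structural gap: after passing to $p=(1+f)/(1-f)$, the only property of $f$ you actually use is that its Taylor coefficients are real (that is what gives $p(\overline z)=\overline{p(z)}$, the Herglotz measure $\nu$ on $[-1,1]$, and the moment condition $\int t\,d\nu=\tau$). These data characterize the strictly larger class $\{f\in\mathcal R \,|\, f'(0)=\tau\}$, not $\mathcal T(\tau)$: the paper exhibits $f(z)=z(z-x)/(xz-1)$, $0<x<1$, which has real coefficients and $f'(0)=x>0$ but satisfies $f(z_0)\notin V_{\mathcal T}(z_0)$, and proves $V_{\mathcal T}(z_0)\subsetneq V_{\mathcal{R}^\geq}(z_0)$. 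Any argument that uses only $\nu$ and its first moment would apply verbatim to this $f$ and (via Theorem \ref{Winnetou}) force its value at $z_0$ into $V_{\mathcal T(\tau)}(z_0)$, a contradiction. So the typical-realness inequality $\Im f(z)\,\Im z\geq 0$ must enter somewhere, and your outline never uses it; the step you yourself flag --- the ``two-dimensional balayage'' from $\nu\otimes\nu$ on $[-1,1]^2$ to a measure $\mu$ on $B$ --- is exactly where all the work lives and cannot succeed in the generality you set up. Note also that the product-measure kernel $(1-z^2)^2/[(1-2xz+z^2)(1-2yz+z^2)]$ agrees with the kernel in the statement only on the line $x+y=2\tau$, so this is not a mere algebraic rewriting. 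The Choquet alternative has the same problem in different clothing: since $f\mapsto\bigl((1+f)/(1-f)\bigr)^2$ is nonlinear, the convexity of its image over $\mathcal T(\tau)$ and the identification of the extreme points with the kernels indexed by $(x,y)\in B$ are not facts one can ``invoke'' --- they are essentially the theorem itself. (Your local verifications, e.g.\ that the kernel has derivative $4\tau$ at $0$ for every $(x,y)\in B$, are correct but only confirm consistency of the normalization.)
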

\begin{remark}\label{rmk2} In order to show $\mathcal U \subsetneq \mathcal T$ we find a function $f_0\in\mathcal T\setminus \mathcal U$ as follows: Let $\tau=1/2$ and let $\mu$ be the point measure in $(\tau - 1, \tau).$ Then we obtain 
$$f_0(z)=\frac{\sqrt{((1 + z)^2 (1 + z^2))/(1 + z^2 + z^4)} - 
   1}{\sqrt{((1 + z)^2 (1 + z^2))/(1 + z^2 + z^4)} + 1}.$$
	The derivative $f_0'(z)$ has a zero at $z=-(1/4) + (i \sqrt{3})/4 + 1/2 \sqrt{-(9/2) - (i \sqrt{3})/2}\in\D.$ Hence, $f_0\not\in \mathcal U.$
\end{remark}

\begin{theorem}\label{Winnetou} Let $z_0\in\D\setminus\{0\}$ and $\tau \in(0,1].$ Define $\mathcal{T}(\tau):=\{f\in\mathcal T\,|\, f'(0)=\tau\}.$ \\
The set $V_{\mathcal{T}(\tau)}(z_0)$ is the image of the closed region bounded by the two circular arcs 
$$\left\{1 + \frac{4 \tau z_0}{1 - 2 y z_0 + z_0^2}\,|\, y\in[2\tau-1, 1]\right\} \; \text{and} \; \left\{\frac{(z_0+1)^2 (1 + z_0 (-4 + 4 \tau - 2 x + z_0))}{(z_0-1)^2 (1 - 2 x z_0 + 
   z_0^2)}\,|\, x\in[-1,2\tau-1]\right\}$$
	under the map $w\mapsto \frac{\sqrt{w}-1}{\sqrt{w}+1}.$
\end{theorem}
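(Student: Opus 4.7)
By Theorem~\ref{szapiel}, every $f\in\mathcal{T}(\tau)$ is of the form $f=\phi\circ g_{\mu,\tau}$ for some probability measure $\mu$ on $B$, where $\phi(w):=(\sqrt w-1)/(\sqrt w+1)$. Since
$$g_{\mu,\tau}(z_0)=\iint_B K(x,y)\,\mu(dx\,dy), \quad K(x,y):=\frac{(1+z_0)^2\bigl(1-2(1-2\tau+x+y)z_0+z_0^2\bigr)}{(1-2xz_0+z_0^2)(1-2yz_0+z_0^2)},$$
is affine in $\mu$, the set $W:=\{g_{\mu,\tau}(z_0)\mid \mu\}$ is the closed convex hull of the image $K(B)$. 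The plan is to identify $W$ with the closed region $\Omega$ described in the statement, from which $V_{\mathcal{T}(\tau)}(z_0)=\phi(W)=\phi(\Omega)$ will follow.

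Next I would inspect $K$ on $\partial B$. The partial fraction identity
$$K(x,y)=\frac{y-(2\tau-1)}{y-x}\,P(x)+\frac{(2\tau-1)-x}{y-x}\,P(y), \quad P(s):=\frac{(1+z_0)^2}{1-2sz_0+z_0^2},$$
has both coefficients non-negative and summing to $1$ on $B$; in particular $K\equiv P(2\tau-1)$ on the two ``inner'' sides $\{x=2\tau-1\}$ and $\{y=2\tau-1\}$. A direct simplification gives $K(-1,y)=1+4\tau z_0/(1-2yz_0+z_0^2)$, which traces the first arc $A_1$ as $y$ runs over $[2\tau-1,1]$, and the analogous simplification on $\{y=1\}$ yields the second arc $A_2$ as $x$ runs over $[-1,2\tau-1]$. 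Being M\"obius images of real segments, $A_1$ and $A_2$ are circular arcs joining $P(2\tau-1)$ to $K(-1,1)$, and together they bound $\Omega$.

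Finally I would show $W=\Omega$ in two steps. For $\Omega\subseteq W$: every probability measure supported on $\{x=-1\}\cup\{y=1\}$ yields, by the above boundary identification, an element of $\operatorname{conv}(A_1\cup A_2)$, and varying the weight $\alpha$ between the two sides together with the two conditional measures realizes every point of this convex hull; a geometric check that $A_1$ and $A_2$ bow to opposite sides of the chord joining their common endpoints then identifies $\Omega$ as the intersection of two closed disks, hence convex and equal to $\operatorname{conv}(A_1\cup A_2)$. For $W\subseteq\Omega$: using convexity of $\Omega$ it suffices to prove $K(B)\subseteq\Omega$, and this follows from the observation that, for each fixed $x\in[-1,2\tau-1]$, the map $y\mapsto K(x,y)$ is again M\"obius in $y$ and traces a circular arc from $P(2\tau-1)$ to $K(x,1)\in A_2$, these arcs foliating $\Omega$ as $x$ varies. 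Applying $\phi$ will then conclude the proof. The hardest step will be the geometric convexity verification: a careful computation in terms of $z_0$ and $\tau$ is needed to confirm that the two Möbius arcs $A_1$ and $A_2$ really do curve in opposite directions about their common chord.
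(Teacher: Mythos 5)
Your proposal follows essentially the same route as the paper: reduce via Theorem \ref{szapiel} to describing the closed convex hull of the kernel's image $K(B)$, identify $K(\partial B)$ as the two stated circular arcs meeting at $P(2\tau-1)=\frac{(1+z_0)^2}{(1+z_0)^2-4\tau z_0}$ and $K(-1,1)=1+\frac{4\tau z_0}{(1-z_0)^2}$, and argue that the lens they bound is exactly that convex hull. The two verifications you defer --- that the arcs bow to opposite sides of their common chord, and that the arcs $y\mapsto K(x,y)$ remain inside the lens as $x$ varies --- are precisely where the paper's computational content lies: it establishes them by showing $\frac{d}{dy}\arg\frac{d}{dy}s\geq 0$ (each image arc is convex) and $\Im\bigl(s_x/s_y\bigr)\geq 0$ (inward-pointing paths in $B$ map into the interior of $s(\partial B)$), so your outline is sound but incomplete without those calculations. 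Your partial-fraction identity $K(x,y)=\frac{y-(2\tau-1)}{y-x}\,P(x)+\frac{(2\tau-1)-x}{y-x}\,P(y)$ with $P(s)=\frac{(1+z_0)^2}{1-2sz_0+z_0^2}$ is correct and is a nice observation absent from the paper (it instantly explains why $K$ is constant on the sides $x=2\tau-1$ and $y=2\tau-1$), but note that by itself it only places $K(B)$ in the convex hull of the single arc $P([-1,1])$, not in the lens, so it cannot substitute for the geometric check.
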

\begin{proof}
Fix some $\tau>0$. First we show that the set $A(\tau):=\{g_{\mu,\tau}(z_0)\,|\, \text{$\mu$ is a point measure on $B$}\}$ is convex.
To this end, we evaluate the function $(x,y)\mapsto s(x,y):=\frac{(1 + z)^2 (1 - 2 (1 - 2 \tau + x + y) z + z^2)}{(1 - 2 x z + 
   z^2) (1 - 2 y z + z^2)}$ on $\partial B:$
\begin{itemize}
\item[(i)] $s(x,y)=\frac{(1 + z_0)^2}{(z_0+1)^2 - 4 \tau z_0 }$ when $x=2\tau-1.$
\item[(ii)] $s(x,y)=\frac{(1 + z_0)^2}{(z_0+1)^2 - 4 \tau z_0 }$ when $y=2\tau-1.$ 
\item[(iii)] $s(x,y)=1 + \frac{4 \tau z_0}{1 - 2 y z_0 + z_0^2}$ when $x=-1.$
\item[(iv)] $s(x,y)=\frac{(z_0+1)^2 (1 + z_0 (-4 + 4 \tau - 2 x + z_0))}{(z_0-1)^2 (1 - 2 x z_0 + 
   z_0^2)}$ when $y=1$.
\end{itemize}

We see that $s(\partial B)$ consists of two circular arcs connecting the points
\begin{equation}\label{lc} P(\tau) = \frac{(1 + z_0)^2}{(1+z_0)^2 - 4 \tau z_0} \quad \text{and} \quad  
Q(\tau) = 1 + \frac{4 \tau z_0}{1 - 2 z_0 + z_0^2},
\end{equation}

and the two arcs are given by 
\begin{align*}
s_{1,\tau}&:[2\tau-1,1]\to\C,\quad s_{1,\tau}(y):=1 + \frac{4 \tau z_0}{1 - 2 y z_0 + z_0^2},\\
s_{2,\tau}&:[-1,2\tau-1]\to\C, \quad s_{2,\tau}(x):=\frac{(z_0+1)^2 (1 + z_0 (-4 + 4 \tau - 2 x + z_0))}{(z_0-1)^2 (1 - 2 x z_0 + 
   z_0^2)}. 
\end{align*}
Without loss of generality we restrict to the case $\Im(z_0)\geq 0.$\\
A short calculations shows that then
\begin{align*}
\frac d{dy} \arg \frac{d}{dy} s(x,y)&=4\Im z_0 \frac{1-|z_0|^2}{|1-2 y z_0 +z_0^2|^2} \geq 0,\\
\frac d{dx} \arg\frac{d}{dx} s(x,y)&=4\Im z_0 \frac{1-|z_0|^2}{|1-2 x z_0 +z_0^2|^2} \geq 0.\\
\end{align*}
Furthermore, we have 
\begin{align*}
s_x(x,y):=\frac d{dx}s(x,y)&=\frac{4(2\tau-1-y)z^2(1+z)^2}{(1-2xz+z^2)^2(1-2yz+z^2)},\\
s_y(x,y):=\frac d{dy}s(x,y)&=\frac{4(2\tau-1-x)z^2(1+z)^2}{(1-2yz+z^2)^2(1-2xz+z^2)},
\end{align*}
and thus  
\begin{align}\label{diana}
\Im \frac{s_x(x,y)}{s_y(x,y)}=\frac{2(y-(2\tau-1))}{2\tau-1-x}\frac{1-|z|^2}{|1-2xz+z^2|^2}\Im(z)\cdot (y-x)\geq0
\end{align}
for all $(x,y)\in B.$ \\
This shows that any parallel to either the $x$- or the $y$-axis within $B$ is mapped onto a convex curve, and that whenever we map a path that points inwards in $B$, the image also lies in the interior of $s(\partial B)$. 
Therefore, the convex closure of $A(\tau)$ is equal to the set $W(\tau)$ defined as the compact region bounded by the curves $s_{1,\tau}$ and $s_{2,\tau}$.\\ From Theorem \ref{szapiel} it follows that $V_{\mathcal{T}(\tau)}(z_0)$ is the image of $K(\tau):=\{g_{\mu,\tau}(z_0)\,|\, \text{$\mu$ prob. meas. on $B$}\}$ under the map $w\mapsto \frac{\sqrt{w}-1}{\sqrt{w}+1}.$ The set $K(\tau)$ is the closure of the convex hull of $A(\tau),$ i.e. $K(\tau)=W(\tau)$, which concludes the proof.

\end{proof}

Figure \ref{fig4} shows the set $V_{\mathcal T(\tau)}(z_0)$ for $z_0=0.9 e^{i \pi/4}$ and $\tau=0.1,0.5,0.9$ (shaded regions).

\begin{figure}[h]
\centering
\includegraphics[width=7cm]{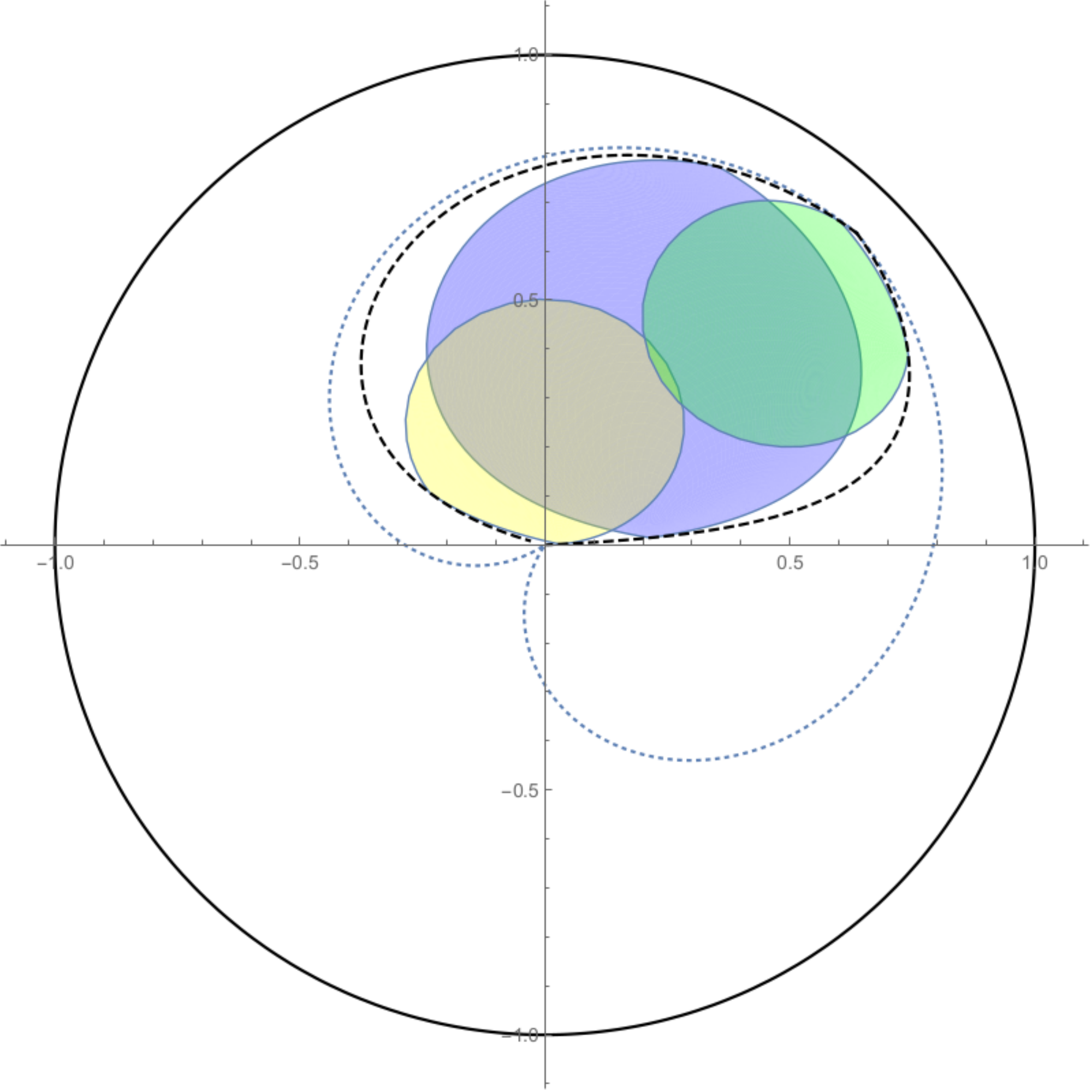}
\caption{$V_{\mathcal T(\tau)}(z_0)$.}
\label{fig4}
\end{figure}

\newpage

\begin{corollary}\label{espresso}Let $z_0\in \D\setminus\{0\}.$ Then $V_{\mathcal T}(z_0)=V_{\mathcal U}(z_0).$ \\
Furthermore, if $z_0\not\in\R$, then each $w\in\partial V_{\mathcal{T}}(z_0)$ except $0$ can be reached by only one mapping  $f\in \mathcal{T}$, which is of the form $f_{1,t}$ or $f_{2,t}$ from Theorem \ref{radial_free}.
\end{corollary}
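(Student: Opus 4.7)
The inclusion $V_{\mathcal U}(z_0)\subseteq V_{\mathcal T}(z_0)$ is immediate from $\mathcal U\subseteq\mathcal T$, so I concentrate on the reverse containment and on the uniqueness claim. By the Schwarz lemma every $f\in\mathcal T$ satisfies $f'(0)=\tau$ for some $\tau\in(0,1]$, hence $V_{\mathcal T}(z_0)=\bigcup_{\tau\in(0,1]}V_{\mathcal T(\tau)}(z_0)$. By Theorem \ref{Winnetou} each $V_{\mathcal T(\tau)}(z_0)$ is the image under $F(w):=(\sqrt{w}-1)/(\sqrt{w}+1)$ of the closed region $W(\tau)$ bounded by the arcs $s_{1,\tau}$ and $s_{2,\tau}$ with corners $P(\tau)$ and $Q(\tau)$. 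The task therefore reduces to showing $F(W(\tau))\subseteq V_{\mathcal U}(z_0)$ for every $\tau$.

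The decisive calculation is the identification of the two corner images. Starting from $P(\tau)=(1+z_0)^2/((1+z_0)^2-4\tau z_0)$, one rationalises $F(P(\tau))$ and uses the algebraic identity $((1+z_0)^2-2\tau z_0)^2-(1+z_0)^2((1+z_0)^2-4\tau z_0)=4\tau^2 z_0^2$ to recover exactly the expression given in Theorem \ref{radial_free} for $f_{1,t}(z_0)$, under the reparametrisation $\tau=e^{-t}$; an entirely analogous computation yields $F(Q(\tau))=f_{2,t}(z_0)$. As $\tau$ ranges over $(0,1]$ the two corner families therefore sweep out precisely the curves $C_+(z_0)$ and $C_-(z_0)$, that is, the whole of $\partial V_{\mathcal U}(z_0)$.

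It remains to verify that the two open arcs of $\partial F(W(\tau))$ also lie in $V_{\mathcal U}(z_0)$. Each such arc point equals $f_\mu(z_0)$ for a Dirac measure $\mu=\delta_{(-1,y)}$ with $y\in(2\tau-1,1)$, or $\mu=\delta_{(x,1)}$ with $x\in(-1,2\tau-1)$. I would analyse these $f_\mu$ directly: on $\partial\D$ the rational function $g_\mu$ is real-valued, and its simple zeros and simple poles (at $e^{\pm i\arccos(y-2\tau)}$ and $e^{\pm i\arccos y}$ in the first case) partition $\partial\D$ into four arcs on which $g_\mu$ alternates sign. A careful branch analysis of $\sqrt{g_\mu}$ along these arcs identifies $f_\mu(\D)$ as $\D$ with two symmetric real slits removed, one near $1$ and one near $-1$, so $f_\mu\in\mathcal U$ and $f_\mu(z_0)\in V_{\mathcal U}(z_0)$; this geometric identification of $f_\mu$ as a univalent double-slit mapping is the main technical obstacle. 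Once it is in place, $\partial F(W(\tau))$ is a Jordan curve contained in the simply connected region $V_{\mathcal U}(z_0)\cup\{0\}$, so by the Jordan curve theorem the bounded component of its complement, together with the curve itself, and hence $F(W(\tau))$, is contained there; taking the union over $\tau$ gives $V_{\mathcal T}(z_0)=V_{\mathcal U}(z_0)$.

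For the uniqueness claim, assume $z_0\notin\R$ and let $w\in\partial V_{\mathcal T}(z_0)\setminus\{0\}=C_+(z_0)\cup C_-(z_0)$; by Theorem \ref{radial_free} there is a unique $j\in\{1,2\}$ and $t_0\in[0,\infty)$ with $w=f_{j,t_0}(z_0)$. If $f\in\mathcal T$ satisfies $f(z_0)=w$, set $\tau:=f'(0)\in(0,1]$, so $w\in V_{\mathcal T(\tau)}(z_0)$. Since $w$ lies on $\partial V_{\mathcal U}(z_0)$ while the open arcs of $\partial F(W(\tau))$ lie strictly inside $V_{\mathcal U}(z_0)$ (they realise double-slit mappings, distinct from the single-slit extremals whose uniqueness on $\partial V_{\mathcal U}(z_0)$ is part of Theorem \ref{radial_free}), $w$ must coincide with a corner of $\partial F(W(\tau))$, forcing $\tau=e^{-t_0}$. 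Extremality of the corner in $W(\tau)$ then restricts any Szapiel representing measure of $f$ to be supported on the two edges of $B$ on which $s(x,y)$ is constantly equal to the corner value, and on this support $s(x,y)(z)$ is independent of the free parameter; every such measure therefore yields the same function, namely the slit mapping $f_{j,t_0}$.
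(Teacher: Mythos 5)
Your overall architecture---decomposing $V_{\mathcal T}(z_0)=\bigcup_{\tau\in(0,1]}V_{\mathcal T(\tau)}(z_0)$, identifying the corners $F(P(\tau))$ and $F(Q(\tau))$ with $f_{1,t}(z_0)$ and $f_{2,t}(z_0)$ under $\tau=e^{-t}$, and then trying to show $F(W(\tau))\subset V_{\mathcal U}(z_0)$ for each $\tau$---parallels the paper, and your corner identification and the final extreme-point argument for uniqueness are sound. The gap is exactly in the step you flag as the main obstacle: the claim that the Dirac measures on the two edges $x=-1$ and $y=1$ of $B$ produce \emph{univalent} two-slit mappings. This is true on the edge $x=-1$, where $g_\mu(z)=1+4\tau z/(1-2yz+z^2)$ maps $\D$ univalently onto the plane minus two real rays, but it is \emph{false} on the edge $y=1$. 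Take $\tau=1/2$ and $\mu=\delta_{(-1/2,\,1)}$: then $g_\mu(z)=\frac{(1+z)^2(1-z+z^2)}{(1+z+z^2)(1-z)^2}=\frac{(1+z)(1+z^3)}{(1-z)(1-z^3)}$, so $g_\mu'/g_\mu=\frac{2}{1-z^2}+\frac{6z^2}{1-z^6}=\frac{2(1+4z^2+z^4)}{1-z^6}$, which vanishes at $z=\pm i\sqrt{2-\sqrt{3}}$ with $|z|^2=2-\sqrt{3}<1$. Since $f'=g_\mu'\big/\bigl(\sqrt{g_\mu}\,(\sqrt{g_\mu}+1)^2\bigr)$ and $g_\mu$ is finite and nonzero at these points, $f'$ vanishes inside $\D$, so $f\notin\mathcal U$ (compare Remark \ref{rmk2}, which exhibits the same phenomenon for an interior point of $B$). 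Consequently neither the containment $\partial F(W(\tau))\subset V_{\mathcal U}(z_0)$ nor the assertion in your uniqueness argument that the open arcs lie strictly inside $V_{\mathcal U}(z_0)$ is established for the $s_{2,\tau}$-arc, and both halves of the corollary remain unproved on your route.

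The paper avoids this issue entirely by never asking whether the edge functions are univalent. Working in the $w$-plane, it shows that the circular arcs $s_{1,\tau}$ and $s_{2,\tau}$ bounding $W(\tau)$ meet the corner curves $C=\{P(t)\}$ and $L=\{Q(t)\}$ only at $P(\tau)$ and $Q(\tau)$ (each of the four circle--circle and circle--line intersection equations has a unique admissible solution, the second intersection occurring only in the limits $x\to\infty$, $y\to\infty$), so $W(\tau)$ is trapped inside the region $R$ bounded by $C$ and $L$; the reverse inclusion $\bigcup_{\tau}W(\tau)\supseteq R\setminus\{1\}$ follows because the segment from $P(\tau)$ to $Q(\tau)$ lies in the convex set $W(\tau)$. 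If you replace your univalence claim for the boundary arcs by this intersection argument, the remainder of your proof (the corner identification and the extreme-point uniqueness step) goes through.
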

\begin{proof}
Again, we denote by $W(\tau)$ the image of $V_{\mathcal{T}(\tau)}(z_0)$ under the map $z\mapsto (1+z)^2/(1-z)^2,$ the inverse function of $w\mapsto \frac{\sqrt{w}-1}{\sqrt{w}+1},$ which is the convex region bounded by the circular arcs $s_{1,\tau}(y),$ $y\in[2\tau-1,1],$ and $s_{2,\tau}(x),$ $x\in[-1, 2\tau-1].$\\ 
Consider the convex region $R$ bounded by the circular arc 
\begin{equation}\label{star}\tag{$*$}C:=\left\{P(\tau)\,|\, \tau\in[0,1]\right\} \text{ and the line segment }
L:=\left\{Q(\tau)\,|\, \tau\in[0,1]\right\},\end{equation}
where $Q(\tau)$ and $P(\tau)$ are defined as in \eqref{lc}.\\

Fix $\tau \in (0,1].$ To show that $W(\tau)$ is contained in $R$, assume the opposite. Then the boundary of $W(\tau)$ has to intersect either $C$ or $L$ in some other point besides $P(\tau)$ and $Q(\tau).$ 
However, it is easy to see that each of the following four equations
\begin{eqnarray*} s_{1,\tau}(y) &= P(t), \quad y,t \in \R, \\
 s_{1,\tau}(y) &= Q(t), \quad y,t \in \R, \\
 s_{2,\tau}(x) &= P(t), \quad x,t \in \R, \\
 s_{2,\tau}(x) &= Q(t), \quad x,t \in \R,
\end{eqnarray*}
has only one solution, namely $(y, t)=(1, \tau),$ $(y, t)=(2\tau-1, \tau),$ $(x, t)=(-1, \tau),$ and $(x, t)=(2\tau-1, \tau)$, respectively. (In all four cases, the second intersection point between the circles/lines is given by the limit cases $y\to \infty$ and $x\to\infty$ respectively.)\\

Hence, we have $W(\tau) \subset R$ for every $\tau\in(0,1].$ Finally, it is clear that every point contained in $R\setminus\{1\}$ (note that $P(0)=Q(0)=1$) is contained in some $W(\tau)$: since every $W(\tau)$ is convex,  the line segment between $P(\tau)$ and $Q(\tau)$ is always contained in $W(\tau)$. \\
Consequently,  $\cup_{\tau\in(0,1]} W(\tau) = R\setminus \{1\}.$ \\

Now we apply the function $z\mapsto (1+z)^2/(1-z)^2,$ the inverse function of $w\mapsto \frac{\sqrt{w}-1}{\sqrt{w}+1},$ to the curves $C_+(z_0)$ and $C_-(z_0)$ from Theorem \ref{pro} and we obtain the curves 
$$\left\{\frac{(1 + z_0)^2}{(1 + z_0)^2 - 4 e^{-t} z_0}\,|\, t \in[0,\infty]\right\} \text{ and } 
\left\{1 + \frac{4 e^{-t} z_0}{(z_0-1)^2}\,|\, t \in[0,\infty]\right\},$$ which are the very same curves as \eqref{star}. Thus, we conclude that $V_{\mathcal T}(z_0)=V_{\mathcal U}(z_0)$.\\

Finally, assume $z_0\not\in\R$ and let $w\in\partial V_{\mathcal T}(z_0)\setminus\{0\}.$ Then $w=P(\tau)$ or $w=Q(\tau)$ for a unique $\tau\in(0,1]$ and the proof of Theorem \ref{Winnetou} shows that there is only one mapping $f\in V_{\mathcal{T}(\tau)}(z_0)$ with $f(z_0)=w.$ From Theorem \ref{radial_free} it follows that $f$ is of the form $f_{1,t}$ or $f_{2,t}.$
\end{proof}

\subsection{Real coefficients}

Finally we take a look at one further value region, determined by Rogosinski in \cite{rogo2}, p.111: Let $\mathcal{R}$ be the set of all holomorphic functions $f:\D\to\D$ with $f(0)=0$ that have only real coefficients in the power series expansion around 0. Then $V_{\mathcal{R}}(z_0)$ is the intersection of the two closed discs whose boundaries are the circles through $1, z_0, -z_0$ and through $-1, -z_0, z_0$ respectively.\\

Let $\mathcal{R}^\geq$ be the set of all holomorphic functions $f\in \mathcal{R}$ with $f'(0)\geq 0$ and $z_0\in\D\setminus\{0\}.$ Then we have $$V_\mathcal{T}(z_0) \subset V_{\mathcal{R}^\geq}(z_0) \subset V_{\mathcal{R}}(z_0).$$ 
It is clear that $V_{\mathcal{R}^\geq}(z_0)\not= V_{\mathcal{R}}(z_0)$ as the point $-z_0$ belongs to $V_{\mathcal{R}}(z_0)$ and there is only one mapping $f\in \mathcal{R}(z_0)$ with $f(z_0)=-z_0$, namely $f(z)=-z$ for all $z\in\D.$ \\
Furthermore, if $z_0\not\in\R,$ we have $V_{\mathcal T}(z_0)\subsetneq V_{\mathcal{R}^\geq}(z_0)$ which can be seen as follows: The boundary points of $V_{\mathcal{R}}(z_0)$ can be reached only by the functions $z\mapsto \pm z \frac{z-x}{zx-1},$ $x\in[-1,1],$ see \cite{rogo2}, p.111. Hence, by Corollary \ref{espresso} we have $\partial V_{\mathcal T}(z_0)\cap \partial V_{\mathcal{R}}(z_0) = \{z_0\}.$ For $0<x<1$, the function $f(z)= z\frac{z-x}{zx-1}$ satisfies $f'(0)=x>0$ and $f(z_0)\not=z_0.$ This gives us $z_0\frac{z_0-x}{z_0x-1}\in V_{\mathcal{R}^\geq}(z_0) \setminus V_{\mathcal T}(z_0).$\\

\begin{theorem}\label{Dost} Let $z_0\in \D\setminus\{0\}.$ Then $V_{\mathcal{R}^\geq}(z_0)$ is the closed convex region bounded by the following three curves:
\begin{eqnarray*}
	A &=& \left\{z_0\frac{z_0-x}{z_0x-1} \,\big|\, x\in[0,1]\right\}, \quad
	B = \left\{z_0\frac{z_0+x}{z_0x+1} \,\big|\, x\in[0,1]\right\},\\
	C &=& \left\{\frac{z_0^2 (z_0+2x-1)}{1+2xz_0-z_0
} \,\big|\, x\in[0,1]\right\}.
\end{eqnarray*}
\end{theorem}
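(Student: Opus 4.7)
My plan is to combine convexity of $V_{\mathcal{R}^{\geq}}(z_0)$ with an explicit attainment of the three bounding curves and a matching upper bound via a Schur-type parametrization. First I would verify that each curve is indeed attained. For $x\in[0,1]$, the functions
\[
f_A(z;x)=\frac{z(z-x)}{zx-1},\qquad f_B(z;x)=\frac{z(z+x)}{zx+1},\qquad f_C(z;x)=\frac{z^{2}(z+2x-1)}{1+(2x-1)z}
\]
are respectively $z$ or $z^{2}$ times a real Blaschke factor, hence real-coefficient self-maps of $\D$ vanishing at $0$, with $f_A'(0)=f_B'(0)=x\geq 0$ and $f_C'(0)=0$. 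All three therefore lie in $\mathcal{R}^{\geq}$, and evaluating at $z_{0}$ produces $A$, $B$, $C$. Since $\mathcal{R}^{\geq}$ is convex (the constraint $f'(0)\geq 0$ is a linear inequality, and self-maps of $\D$ are preserved under convex combinations) and $f\mapsto f(z_{0})$ is affine, $V_{\mathcal{R}^{\geq}}(z_{0})$ is convex, and hence already contains the closed convex hull of $A\cup B\cup C$.

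For the reverse inclusion I would exploit the factorization $f(z)=z\phi(z)$, where $\phi$ is a real Schur function with $\phi(0)=f'(0)=:\tau\in[0,1]$. The case $\tau=1$ is handled by the maximum principle ($\phi\equiv 1$, so $f(z_{0})=z_{0}$). For $\tau\in[0,1)$ the Schur algorithm produces a second real Schur function $\psi$ with $\phi(z)=(\tau+z\psi(z))/(1+\tau z\psi(z))$, hence
\[
f(z_{0})=z_{0}\cdot\frac{\tau+z_{0}\psi(z_{0})}{1+\tau z_{0}\psi(z_{0})}.
\]
A two-point Pick interpolation at $\{z_{0},\bar z_{0}\}$ with targets $\{u,\bar u\}$ (using the standard averaging trick $\psi\mapsto(\psi(z)+\overline{\psi(\bar z)})/2$ to obtain a real solution) identifies the set of admissible values $u=\psi(z_{0})$ with the closed lens
\[
\Lambda(z_{0})=\bigl\{\,u\in\overline{\D}\,\big|\,(1-|u|^{2})\,\Im z_{0}\geq|\Im u|\,(1-|z_{0}|^{2})\,\bigr\},
\]
whose boundary consists of the two circular arcs through $\{-1,z_{0},1\}$ and $\{-1,\bar z_{0},1\}$.

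For fixed $\tau\in[0,1)$ the Möbius map $u\mapsto z_{0}(\tau+z_{0}u)/(1+\tau z_{0}u)$ sends $\Lambda(z_{0})$ onto $V_{\mathcal{R}^{\geq}(\tau)}(z_{0})$, itself a lens-shaped set whose corners (the images of $u=\pm 1$) are readily computed to be the points $A(\tau)=z_{0}(z_{0}-\tau)/(z_{0}\tau-1)$ on $A$ and $B(\tau)=z_{0}(z_{0}+\tau)/(z_{0}\tau+1)$ on $B$; so as $\tau$ ranges over $[0,1]$ these corners trace out $A$ and $B$. Setting $\tau=0$ shows that the image of the upper arc of $\Lambda(z_{0})$ under $u\mapsto z_{0}^{2}u$ is exactly curve $C$, because the substitution $a=1-2x$ gives
\[
\{z_{0}^{2}(z_{0}-a)/(1-az_{0})\mid a\in[-1,1]\}=\{z_{0}^{2}(z_{0}+2x-1)/(1+(2x-1)z_{0})\mid x\in[0,1]\}=C.
\]
It then remains to verify the envelope claim that $\bigcup_{\tau\in[0,1]}V_{\mathcal{R}^{\geq}(\tau)}(z_{0})$ has outer boundary precisely $A\cup B\cup C$, with every other bounding arc of the individual lenses absorbed into the interior as $\tau$ varies.

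The main obstacle is exactly this last envelope verification: rigorously establishing that the one-parameter family of Möbius-transformed lenses $\{V_{\mathcal{R}^{\geq}(\tau)}(z_{0})\}_{\tau\in[0,1]}$ fills the claimed region without protruding beyond $A\cup B\cup C$, and that the inner bounding arcs are indeed interior. Once this is in hand, combining convexity, the lower bound, and this upper bound yields $V_{\mathcal{R}^{\geq}}(z_{0})$ = the closed convex region bounded by $A$, $B$, $C$, as asserted.
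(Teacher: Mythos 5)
Your setup is sound and genuinely different from the paper's: instead of passing to $g=(1+f)/(1-f)$, using the Herglotz representation with a symmetric measure on $[0,\pi]$ and invoking Winkler's theorem on extreme points of moment sets, you factor $f(z)=z\phi(z)$, peel off one Schur parameter, and reduce everything to Rogosinski's lens $\Lambda(z_0)$ for unnormalized real Schur functions (your Pick-matrix description of $\Lambda(z_0)$ is correct and equivalent to the intersection of the two discs through $\{\pm1,z_0\}$ and $\{\pm1,\bar z_0\}$). The lower bound is complete: $f_A,f_B,f_C$ do lie in $\mathcal{R}^\geq$, $\mathcal{R}^\geq$ is convex, hence $V_{\mathcal{R}^\geq}(z_0)$ is a closed convex set containing $\operatorname{conv}(A\cup B\cup C)$. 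The identification of the corners of each slice $V_{\mathcal{R}^\geq(\tau)}(z_0)$ with points of $A$ and $B$, and of one arc of the $\tau=0$ slice with $C$, is also correct.

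However, the proof is not finished, and the missing piece is not a routine verification but the actual substance of the upper bound. You must show that for every $\tau\in(0,1)$ the M\"obius-transformed lens $z_0\,m_\tau(z_0\Lambda(z_0))$ is contained in the region bounded by $A\cup B\cup C$; only its two corners are accounted for, and a priori each of its two bounding arcs could bulge past $C$ (or past $A$ or $B$ between consecutive corner points). Nothing in your argument rules this out: M\"obius maps do not preserve convexity, so you cannot even assert that each slice is convex, let alone that the one-parameter family of slices has outer envelope exactly $A\cup B\cup C$. This is precisely where the paper's proof does its hard work --- after transporting to the half-plane it must check that the auxiliary arcs \eqref{Incurve} and \eqref{Incurve2} stay inside $\Delta$, pinning down second intersection points of the relevant circles, and it closes the remaining case with a separate convexity contradiction using Rogosinski's description of $\partial V_{\mathcal R}(z_0)$. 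An analogous circle-intersection analysis (e.g.\ showing that the circles carrying the arcs of $\partial\bigl(z_0 m_\tau(z_0\Lambda(z_0))\bigr)$ meet the circles carrying $A$, $B$, $C$ only at the already-identified points, so no arc can cross to the outside) would complete your argument, but as written the proposal establishes only one of the two inclusions.
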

\begin{proof}
Let $f\in \mathcal{R}^\geq.$ Then $g(z):= (1+f(z))/(1-f(z))$ maps $\D$ into the right half-plane with $g(0)=1$, $g$ has only real coefficients in its power series expansion around $0$ and $g'(0)=2f'(0)\geq0.$ 
Due to the Herglotz representation (\cite{Duren:1983}, Section 1.9) we can write $g$ as
\begin{equation}
g(z) = \int_{\partial \D} \frac{u+z}{u-z}\, \nu(du),
\label{eq:1}
\end{equation}
for some probability measure $\nu$ on $\partial \D.$\\
As $g$, and thus $\nu$, is symmetric with respect to the real axis, one can rewrite \eqref{eq:1} as
\begin{equation}
g(z) = \int_{\partial \D} 1/2 \frac{u+z}{u-z} + 1/2 \frac{\overline{u}+z}{\overline{u}-z}\, \nu(du) = 
 \int_{\partial \D} \frac{1-z^2}{1 - 2 z \text{Re}(u) + z^2}\, \nu(du),
\end{equation}
or
\begin{equation}\label{Myschkin}
g(z) = G_\mu(z):=\int_{[0,\pi]} \frac{1-z^2}{1 - 2 z \cos(u) + z^2}\, \mu(du),
\end{equation}
where $\mu$ is a probability measure on $[0,\pi]$ that additionally fulfils
\begin{equation}\label{workhard}\int_{[0,\pi]} \cos(u) \,\mu(du) \geq  0,\end{equation}
as the last integral is equal to $g'(0)/2=f'(0).$ Thus we need to determine the extreme points of the convex set of all measures on $[0,\pi]$ that satisfy \eqref{workhard}. According to \cite{Winkler}, Theorem 2.1, 
this set of extreme points is contained in the set $S$ that consists of all point measures  $\mu=\delta_\phi$ on $[0,\pi]$ satisfying \eqref{workhard} and of all convex combinations $\mu= \lambda \delta_\phi + (1-\lambda)\delta_\varphi$, with $\phi,\varphi\in[0,\pi]$, $\phi\not=\varphi,$ $\lambda\in(0,1),$ satisfying \eqref{workhard}.\\

We can now determine $V_{\mathcal{R}^\geq}(z_0)$ as follows: Denote by $W_{\mathcal{R}^\geq}(z_0)$ the image of $V_{\mathcal{R}^\geq}(z_0)$ under the injective map $w\mapsto (1+w)/(1-w).$ Then $W_{\mathcal{R}^\geq}(z_0)$ is the closure of the convex hull of the set $\{G_\mu(z_0) \,|\, \mu \in S\}.$\\

The point measures from $S$ are, of course, all $\delta_\phi$ with $\phi\in[0,\pi/2],$ and  \eqref{Myschkin} gives us the curve 
\begin{equation}\label{Acurve}\frac{1-z_0^2}{1 - 2 z_0 x + z_0^2}, \quad x\in[0,1],
\end{equation}
a circular arc connecting the points $\frac{1-z_0^2}{1+z_0^2}$ and $\frac{1+z_0}{1-z_0}.$ This curve is the image of $A$ under the map $w\mapsto\frac{1+w}{1-w}.$ The image of $B$ is the line segment
\begin{equation}\label{Bcurve}\frac{1 + 2 x z_0 + z_0^2}{1 - z_0^2}, \quad x\in[0,1],
\end{equation}
which connects $\frac{1 + z_0^2}{1 - z_0^2}$ to $\frac{1+z_0}{1-z_0}.$\\

The other measures have the form $\lambda \delta_\phi + (1-\lambda)\delta_\varphi$ with w.l.o.g. $\phi\in[0,\pi/2],$ $\varphi\in[0,\pi]$, $\phi\not=\varphi$, $\lambda\in(0,1)$ such that $\lambda \cos(\phi)+(1-\lambda)\cos(\varphi) \geq 0.$ They lead to the set
\begin{equation}\label{Set}\lambda \frac{1-z_0^2}{1 - 2 z_0 x + z_0^2} + (1-\lambda) \frac{1-z_0^2}{1 - 2 z_0 y + z_0^2} , \quad x\in[0,1], y\in[-1,1], \lambda \in (0,1), \lambda x + (1-\lambda) y \geq 0.\end{equation}

If we take $x=1,$ $y\in[-1,0]$ and $\lambda=\frac{y}{y-1}$ (which is equivalent to $\lambda  + (1-\lambda) y = 0$), then we obtain

\begin{equation}\label{Ccurve} \frac{(1 + z_0) (1 - 2 (1 + y) z_0 + z_0^2)}{(1 - z_0) (1 - 2 y z_0 + z_0^2)}. 
\end{equation}
The above expression describes a circular arc connecting the points $\frac{1 + z_0^2}{1 - z_0^2}$ and $\frac{1-z_0^2}{1+z_0^2}$, and this arc is the image of the curve $C$ under the map  $w\mapsto\frac{1+w}{1-w}.$ Note that the point $\frac{1+z_0}{1-z_0}$ lies on the full circle, which is obtained when $y\to\infty.$ \\Denote by $\Delta$ the closed bounded region bounded by the three curves \eqref{Acurve}, \eqref{Bcurve} and \eqref{Ccurve}. Then $\Delta$ is convex. We are done if we can show that for all other measures from \eqref{Set}, $G_\mu(z_0)$ belongs to $\Delta.$ Then we can conclude that $\Delta$ is the closure of the convex hull of $\{G_\mu(z_0) \,|\, \mu\in S\}.$\\

First, we consider the points from \eqref{Set} for $x\in[0,1],$ $y\in[-1,0]$ and again $\lambda=\frac{y}{y-x}:$
\begin{itemize}
\item[a)]
 For $y=1,$ we obtain the curve
\begin{equation}\label{Incurve}
 -\frac{(-1 + z_0) (1 + z_0 (2 - 2 x + z_0))}{(1 + z_0) (1 - 2 x z_0 + z_0^2)} , \quad x\in[0,1].
\end{equation}
Like the curve \eqref{Ccurve}, this arc also connects $\frac{1 + z_0^2}{1 - z_0^2}$ and $\frac{1-z_0^2}{1+z_0^2}$. For $x\to\infty$ we obtain $\frac{1-z_0}{1+z_0}$, which is the second intersection point of the full circles corresponding to \eqref{Acurve} and \eqref{Bcurve} (note that this point is mapped onto $-z_0$ under $w\mapsto\frac{1+w}{1-w}$). We conclude that \eqref{Incurve} is contained in $\Delta.$ The convex set bounded by \eqref{Ccurve} and \eqref{Incurve} will be denoted by $\Delta_0$, and is, of course, contained in $\Delta.$ (Figure \ref{fig6} shows the image of $\Delta_0$ under the map $w\mapsto\frac{1+w}{1-w}$.)
\item[b)] Now fix $x\in[0,1)$ and \eqref{Set} becomes  
\begin{equation}\label{Incurve2}
 \frac{(-1 + z_0^2) (1 - 2 (x + y) z_0 + z_0^2)}{(-1 + 2 x z_0 - z_0^2) (1 - 2 y z_0 + 
   z_0^2)} , \quad y\in[0,1].
\end{equation}
This set is a circular arc connecting $\frac{1-z_0^2}{1+z_0^2}$ ($y=0$) to a point on \eqref{Incurve} ($y=1$). For $y\to \infty$ we obtain the point $\frac{1 - z_0^2}{1 - 2 x z_0 + z_0^2}$, which lies on \eqref{Acurve} and is not contained in $\Delta_0.$ We conclude that the arc \eqref{Incurve2} lies within $\Delta_0$ and thus in $\Delta.$
\end{itemize}

Finally, assume there are $x\in[0,1], y\in[-1,1], \lambda\in(0,1)$ such that $\lambda x + (1-\lambda) y > 0$ (i.e. $G_\mu'(0)>0$) and the corresponding point \eqref{Set} lies outside $\Delta.$ 
Since it has nevertheless to lie in $V_{\mathcal{R}}(z_0)$, the line segment between this point and $\frac{1+z_0}{1-z_0}$ must intersect the curve \eqref{Ccurve}. Thus, the set $\{f(z_0) \,|\, f\in \mathcal{R}, f'(0)>0 \}$, which doesn't contain the curve $C$, could not be convex, a contradiction.

\end{proof}

The sets $V_{\mathcal T}(z_0)$ (orange), $V_{\mathcal{R}^\geq}(z_0)$ (red) and $V_{\mathcal{R}}(z_0)$ (green) are shown in Figure \ref{fig5} for $z_0=\frac1{3}+\frac{i}{2}.$

\begin{figure}[h]
\rule{0pt}{0pt}
\hspace{1.2cm}
\begin{minipage}[hbt]{6cm}
\centering
\includegraphics[width=6cm]{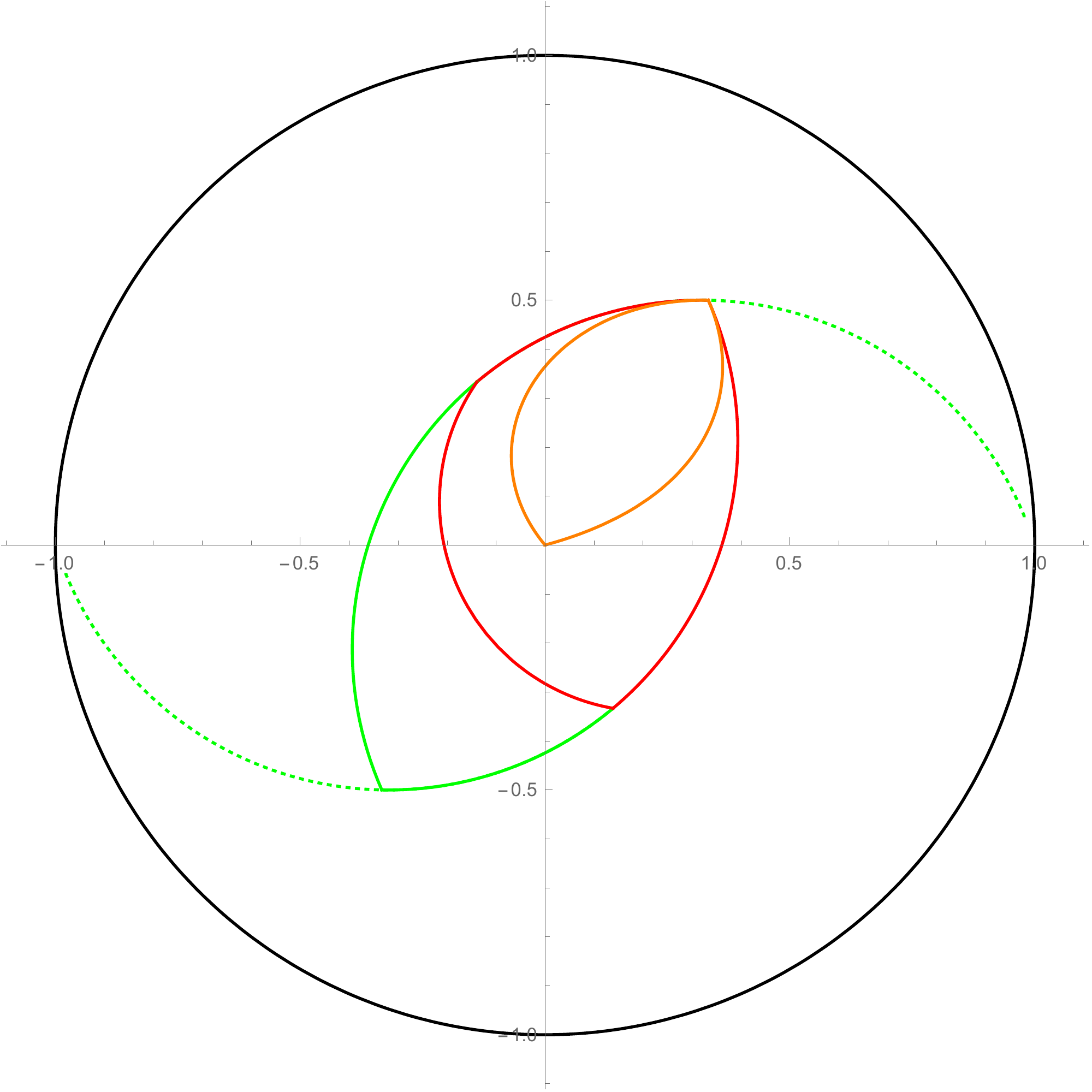}
\caption{$V_{\mathcal T}(z_0), V_{\mathcal{R}^\geq}(z_0), V_{\mathcal{R}}(z_0)$.}
\label{fig5}
\end{minipage}
\hspace{1.2cm}
\begin{minipage}[hbt]{6cm}
\centering
\includegraphics[width=6cm]{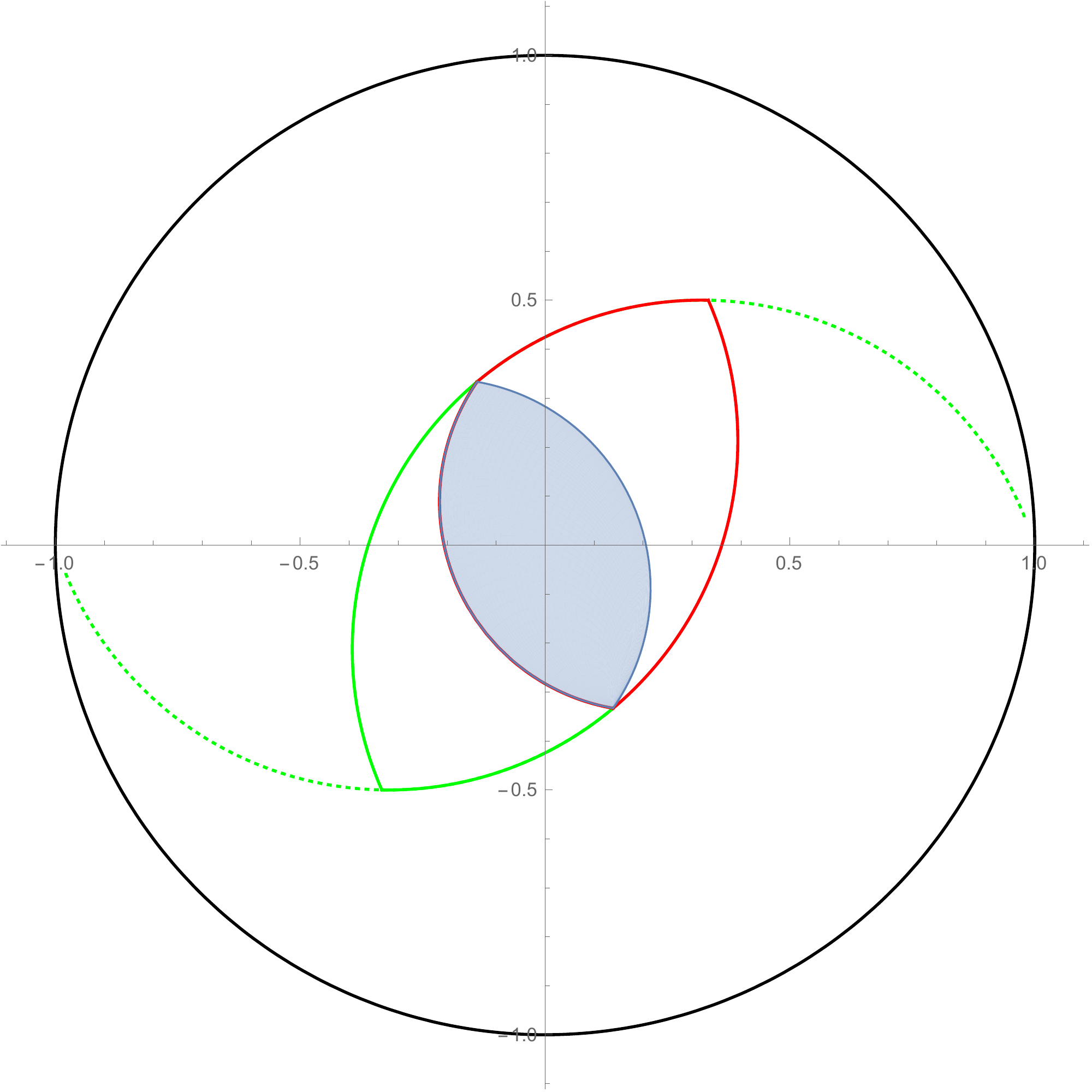}
\caption{$\Delta_0.$}
\label{fig6}
\end{minipage}
\end{figure}

\newpage 

\begin{corollary} Let $\mathcal R^0 = \{f\in \mathcal R^\geq \,|\, f'(0)=0\}$ and $z_0\in\D\setminus\{0\}.$ Then $V_{\mathcal{R}^0}(z_0)=\Delta_0,$ i.e. $V_{\mathcal{R}^0}(z_0)$ is the closed convex set bounded by the circular arcs $C$ and $-C$, which intersect at $z_0^2$ and $-z_0^2.$
\end{corollary}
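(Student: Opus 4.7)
My plan is to follow the strategy of the proof of Theorem~\ref{Dost}, but with the condition $f'(0)\ge 0$ replaced by the stricter $f'(0)=0$. As before, setting $g(z)=(1+f(z))/(1-f(z))$ and applying the Herglotz representation yields $g=G_\mu$ for a probability measure $\mu$ on $[0,\pi]$, but $g'(0)=2f'(0)=0$ now forces the \emph{equality} constraint $\int_{[0,\pi]}\cos u\,d\mu(u)=0$ instead of the inequality $\geq 0$. Let $M^0$ denote the resulting convex set of measures. By \cite{Winkler}, Theorem~2.1, applied with one linear equality constraint, the extreme points of $M^0$ are the single Dirac measure $\delta_{\pi/2}$ together with the two-point measures $\mu_{x,y}:=\lambda\delta_\phi+(1-\lambda)\delta_\varphi$ where $x=\cos\phi\in(0,1]$, $y=\cos\varphi\in[-1,0)$, $\phi\ne\varphi$, and $\lambda=-y/(x-y)\in(0,1)$.

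Substituting the equality $\lambda x+(1-\lambda)y=0$ into the formula used in the proof of Theorem~\ref{Dost} simplifies the two-point case to
\[
G_{\mu_{x,y}}(z_0)=\frac{(1-z_0^2)\bigl(1-2(x+y)z_0+z_0^2\bigr)}{(1-2xz_0+z_0^2)(1-2yz_0+z_0^2)}.
\]
The boundary of this two-parameter family in $\C$ consists of two curves: the edge $x=1$ reproduces exactly curve~\eqref{Ccurve} from the proof of Theorem~\ref{Dost}, which is the image of $C$ under the M\"obius map $w\mapsto(1+w)/(1-w)$; the edge $y=-1$ reproduces curve~\eqref{Incurve}, and the substitutions $z_0\mapsto -z_0$, $y\mapsto -x$ in the $x=1$ expression identify it with the image of $-C$ under the same M\"obius map. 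The degenerate edges $x\to 0$ and $y\to 0$, together with the Dirac extreme $\delta_{\pi/2}$, all collapse to the single value $(1-z_0^2)/(1+z_0^2)$, whose preimage under $w\mapsto(w-1)/(w+1)$ is $-z_0^2$, the common endpoint of $C$ and $-C$ (together with $z_0^2$).

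Krein--Milman applied in the $g$-plane then gives $K^0(z_0):=\{G_\mu(z_0):\mu\in M^0\}=\overline{\mathrm{conv}}\bigl(\eqref{Ccurve}\cup\eqref{Incurve}\bigr)$. Using the convexity of $V_{\mathcal R^0}(z_0)$ (immediate from $\mathcal R^0$ being closed under convex combinations of functions) and pulling back via $w\mapsto(w-1)/(w+1)$ (which sends circles to circles), one concludes $V_{\mathcal R^0}(z_0)=\Delta_0$, the closed convex region bounded by $C\cup(-C)$. As a sanity check, $V_{\mathcal R^0}(z_0)=-V_{\mathcal R^0}(z_0)$ follows from the invariance of $\mathcal R^0$ under $f\mapsto -f$, which is in agreement with the $w\mapsto -w$ symmetry of $C\cup(-C)$.

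The principal obstacle is verifying that the closed convex hull of the two boundary arcs $\eqref{Ccurve}\cup\eqref{Incurve}$ coincides with the Jordan region that they enclose, i.e.\ that this region is convex. This reduces to the convexity/monotonicity computation \eqref{diana} of the proof of Theorem~\ref{Dost}, applied now on the sub-locus $\lambda x+(1-\lambda)y=0$ of $B$: the restricted parametrisation $(x,y)\mapsto G_{\mu_{x,y}}(z_0)$ has sign-definite Jacobian, so it maps diffeomorphically onto a two-dimensional Jordan region with boundary $\eqref{Ccurve}\cup\eqref{Incurve}$, and the circular-arc structure of these two curves then forces the enclosed region to be convex. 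No essentially new analytical ingredient beyond the Dost proof is required.
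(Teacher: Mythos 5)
Your proposal is correct and follows essentially the same route as the paper: the paper's own proof is a two-line reduction to the proof of Theorem \ref{Dost}, replacing \eqref{workhard} by the equality constraint (written as two inequalities so that Winkler's theorem applies) and observing that the resulting extreme-point values sweep out exactly the arcs \eqref{Ccurve} and \eqref{Incurve} bounding $\Delta_0$. You simply carry out this same reduction explicitly, with correct computations of the two-point-measure formula and the identification of the $x=1$ and $y=-1$ edges with the images of $C$ and $-C$.
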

\begin{proof} We can proceed as in the proof of Theorem \ref{Dost}. Condition \eqref{workhard} then has to be replaced by 
\begin{equation}\label{workhard2}\int_{[0,\pi]} \cos(u) \,\mu(du) =  0, \text{ which we can also write as}\end{equation}
 \begin{equation*}\int_{[0,\pi]} \cos(u) \,\mu(du) \geq  0 \text{ and } \int_{[0,\pi]} \cos(u) \,\mu(du) \leq  0
\end{equation*}
in order to apply again \cite{Winkler}, Theorem 2.1. Then we obtain that the image of $V_{\mathcal{R}^0}(z_0)$ under the map $w\mapsto (1+w)/(1-w)$ is equal to the closure of the convex hull of the set $\{G_\mu(z_0) \,|\, \mu \in S, \mu \text{ satisfies \eqref{workhard2}}\}.$

The proof of Theorem \ref{Dost} shows that this set is equal to $\Delta_0.$
\end{proof}

\begin{corollary} Let $\mathcal R^> = \{f\in \mathcal R^\geq \,|\, f'(0)>0\}$ and $z_0\in\D\setminus\{0\}.$ Then $V_{\mathcal{R}^>}(z_0)=V_{\mathcal{R}^\geq}(z_0)\setminus C,$ where $C$ is the curve from Theorem \ref{Dost}.
\end{corollary}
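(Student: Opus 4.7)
The plan is to establish the two inclusions separately, working with the Herglotz parametrization from the proof of Theorem \ref{Dost}. Write $\Omega:=V_{\mathcal{R}^\geq}(z_0)$.

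For $\Omega\setminus C\subseteq V_{\mathcal{R}^>}(z_0)$, a first observation is that $\mathcal{R}^\geq$ is convex as a family of functions: if $f,g\in\mathcal{R}^\geq$ and $\alpha\in[0,1]$, then $h:=\alpha f+(1-\alpha)g$ has real Taylor coefficients, satisfies $h(0)=0$, $h'(0)\geq 0$, and $|h(z)|\leq\alpha|f(z)|+(1-\alpha)|g(z)|<1$ on $\D$. Using this, points on $A\setminus\{-z_0^2\}$ and on $B\setminus\{z_0^2\}$ are realized directly by the Rogosinski-type functions $z\mapsto z(z-x)/(zx-1)$ and $z\mapsto z(z+x)/(zx+1)$ with $x\in(0,1]$, both of which have $f'(0)=x>0$. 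For $w$ in the interior of $\Omega$, since the corner $z_0$ of $\Omega$ is attained by $\operatorname{id}\in\mathcal{R}^>$, the ray from $z_0$ through $w$ meets $\partial\Omega$ at some $w'$, giving $w=\lambda z_0+(1-\lambda)w'$ with $\lambda\in(0,1)$; picking any $\tilde f\in\mathcal{R}^\geq$ with $\tilde f(z_0)=w'$, the convex combination $f:=\lambda\operatorname{id}+(1-\lambda)\tilde f$ is in $\mathcal{R}^\geq$, satisfies $f(z_0)=w$, and has $f'(0)\geq\lambda>0$.

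For the reverse inclusion, the task is to show that $f(z_0)\in C$ forces $f'(0)=0$. The plan is to work on the $g$-side: via $g=(1+f)/(1-f)$, the function $f\in\mathcal{R}^\geq$ corresponds bijectively to a probability measure $\mu$ on $[0,\pi]$ with $\int_0^\pi\cos u\,d\mu\geq 0$, and $f'(0)$ is a positive multiple of this integral. Denote this compact convex set of measures by $M$, let $\tilde\Omega$ be the Cayley image of $\Omega$, and write $\tilde C$ for the Cayley image of $C$, which is the arc \eqref{Ccurve}. Since $\tilde\Omega$ is convex and $\tilde C$ is a boundary arc lying on a circle $\mathcal{K}$, the set $\tilde\Omega$ lies inside the closed disk bounded by $\mathcal{K}$. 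Consequently, for $\tilde w$ in the relative interior of $\tilde C$ the tangent line to $\mathcal{K}$ at $\tilde w$ is a supporting line of $\tilde\Omega$ which meets $\tilde\Omega$ only at $\tilde w$. Pulling this tangent back through the affine map $\mu\mapsto G_\mu(z_0)$ gives a continuous affine functional on $M$ whose minimum (zero) is attained precisely on the fiber $M_{\tilde w}:=\{\mu\in M:G_\mu(z_0)=\tilde w\}$, so $M_{\tilde w}$ is an exposed face of $M$.

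By Krein--Milman the extreme points of $M_{\tilde w}$ are those extreme points of $M$ lying in $M_{\tilde w}$, and Winkler's theorem (\cite{Winkler}, Theorem 2.1), already used in the proof of Theorem \ref{Dost}, identifies the extreme points of $M$ as one-atom measures $\delta_\phi$ with $\phi\in[0,\pi/2]$ and two-atom measures binding the cosine constraint $\int\cos\,d\mu=0$. The images of the one-atom measures sweep out the arc \eqref{Acurve}, which as an adjacent boundary arc meets $\tilde C$ only at the shared corner $(1-z_0^2)/(1+z_0^2)$. Hence for $\tilde w$ in the relative interior of $\tilde C$ no one-atom extreme point lies in $M_{\tilde w}$, every extreme point of $M_{\tilde w}$ has $\int\cos\,d\mu=0$, and weak-$\ast$ continuity of this integral transfers this to all of $M_{\tilde w}$, giving $f'(0)=0$. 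For the two endpoints $w=\pm z_0^2$ of $C$, Rogosinski's uniqueness on $\partial V_{\mathcal R}(z_0)$ (recalled in the paragraph before the statement) pins the only realizer to $z\mapsto\mp z^2$, again with $f'(0)=0$. The main subtlety is verifying the single-point contact between the tangent line and $\tilde\Omega$; this rests on $\tilde C$ being a circular arc and the other two boundary arcs of $\tilde\Omega$ lying strictly inside $\mathcal{K}$, meeting $\tilde C$ only at the two corners.
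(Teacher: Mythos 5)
Your proof is correct in substance, and at both key steps it takes a route that differs from the paper's. For the inclusion $V_{\mathcal{R}^\geq}(z_0)\setminus C\subseteq V_{\mathcal{R}^>}(z_0)$, the paper shows instead that $C$ lies in the \emph{closure} of $V_{\mathcal{R}^>}(z_0)$ by an explicit perturbation of the measures in \eqref{Set} (taking $\lambda=\frac{y-1/n}{y-x}$, so that the integral in \eqref{workhard} equals $1/n$) and then invokes convexity of $V_{\mathcal{R}^>}(z_0)$; your direct argument --- every interior point is a convex combination of $z_0=\operatorname{id}(z_0)$ with a boundary point, and the corresponding convex combination of functions has derivative at least $\lambda>0$ at the origin --- reaches the same conclusion without any approximation step. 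For the reverse inclusion the paper simply \emph{asserts} that $C$ is not attained with $f'(0)>0$, implicitly relying on the computation in the proof of Theorem \ref{Dost} that the candidate extreme measures landing on \eqref{Ccurve} all satisfy $\int\cos u\,d\mu=0$; your face/Krein--Milman/Winkler argument is exactly what is needed to upgrade ``the extremal representatives of $\tilde w$ have $f'(0)=0$'' to ``\emph{every} representative has $f'(0)=0$'', so here you are supplying a step the paper leaves implicit. Your refinement that two-atom extreme points must bind the constraint is also correct and necessary (a two-atom measure with $\int\cos u\,d\mu>0$ sits in the relative interior of a segment of admissible measures, hence is not extreme).

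The one soft spot is the one you flag yourself: you realize $M_{\tilde w}$ as an \emph{exposed} face, which requires the tangent line to $\mathcal K$ at $\tilde w$ to meet $\tilde\Omega$ in the single point $\tilde w$, and you do not verify this. It can be closed more cheaply by dropping the tangent line altogether: it suffices that $\tilde w$ is an \emph{extreme point} of $\tilde\Omega$, for then $\mu=\frac12(\mu_1+\mu_2)$ with $G_\mu(z_0)=\tilde w$ forces $G_{\mu_1}(z_0)=G_{\mu_2}(z_0)=\tilde w$, so $M_{\tilde w}$ is a face of $M$ with no further geometry. And a point in the relative interior of a nondegenerate circular boundary arc of a convex set is automatically extreme: a segment of $\tilde\Omega$ having $\tilde w$ in its relative interior would have to lie in a supporting line at $\tilde w$, hence in $\partial\tilde\Omega$, and would then coincide locally with the circular arc, which is impossible. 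Finally, note that your argument (like the paper's) tacitly assumes $z_0\notin\R$: for real $z_0$ the two circles through the corners degenerate to the real line, the region collapses to an interval, and relative interior points of $C=[-z_0^2,z_0^2]$ \emph{are} attained with $f'(0)>0$ (for instance by $z\mapsto z(z-x)/(zx-1)$ with small $x>0$), so the statement itself requires the nonreal hypothesis.
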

\begin{proof} Obviously, the curves $A$ and $B$ from Theorem \ref{Dost} minus their endpoint $z_0^2$ and $-z_0^2$ belong to $V_{\mathcal{R}^>}(z_0).$ The curve $C$ does not belong to the set $V_{\mathcal{R}^>}(z_0)$, but it belongs to its closure, which can be seen by approximating the curve by points from \eqref{Set} for $\lambda = \frac{y-1/n}{y-x},$ $n\in\N,$ which means the integral in \eqref{workhard} is equal to $1/n$ in this case. \\
As $V_{\mathcal{R}^>}(z_0)$ is a convex set, we conclude that $V_{\mathcal{R}^>}(z_0)$ is equal to $V_{\mathcal{R}^\geq}(z_0)\setminus C.$

\end{proof}

\end{document}